\documentclass[preprint,12pt]{elsarticle}

\usepackage{amsmath,amssymb,amsfonts,amsthm,mathtools,bm}
\usepackage{graphicx}
\usepackage{float}
\usepackage{enumitem}
\usepackage[colorlinks=true,linkcolor=blue,citecolor=blue,urlcolor=blue]{hyperref}
\usepackage{mathrsfs}

\journal{Journal of Mathematical Analysis and Applications}
\biboptions{numbers,sort&compress}
\allowdisplaybreaks[3]
\numberwithin{equation}{section}

\newcommand{\R}{\mathbb R}
\newcommand{\C}{\mathbb C}
\newcommand{\ii}{\mathrm i}
\newcommand{\dd}{\,\mathrm d}
\newcommand{\eps}{\varepsilon}
\newcommand{\na}{\nabla}
\newcommand{\pa}{\partial}
\newcommand{\ez}{e_3}

\newcommand{\Ima}{\operatorname{Im}}
\newcommand{\Rea}{\operatorname{Re}}
\newcommand{\Err}{\operatorname{Err}}

\newcommand{\one}{\mathbf 1}
\newcommand{\mopt}{\mathfrak m^{\rm opt}}
\newcommand{\Aopt}{A^{\rm opt}}

\newtheorem{theorem}{Theorem}[section]
\newtheorem{proposition}[theorem]{Proposition}
\newtheorem{lemma}[theorem]{Lemma}
\newtheorem{definition}[theorem]{Definition}
\theoremstyle{remark}
\newtheorem{remark}[theorem]{Remark}

\begin{document}

\begin{frontmatter}

\title{Conditional scattering criteria for cylindrical threshold dynamics of the three-dimensional focusing energy-critical Schr\"odinger equation}

\author[addr1]{Pang-Hung Chung}
\ead{penghongzhong@yahoo.com}
\author[addr2]{Dan Han\corref{cor1}}
\ead{dan.han@louisville.edu}
\cortext[cor1]{Corresponding author.}
\address[addr1]{Department of Applied Mathematics, Guangdong University of Education, Guangzhou 510640, P. R. China}
\address[addr2]{Department of Mathematics, University of Louisville, Louisville, KY 40245, USA}

\begin{abstract}
Below-threshold scattering is studied for the three-dimensional focusing energy-critical Schr\"odinger equation in the cylindrically symmetric class.  For soliton-like compact critical elements, two linked rigidity entrances are isolated: a best fixed-axis window condition for the axial center and a low-frequency tail smallness condition.  A shifted localized virial estimate and a finite-mass virial argument after axial zero-momentum normalization exclude these entrances, yielding a conditional cylindrical threshold scattering criterion via compactness reduction.
\end{abstract}

\begin{keyword}
energy-critical NLS \sep threshold scattering \sep cylindrical symmetry \sep localized virial method
\MSC[2020] 35Q55 \sep 35B40 \sep 35B44 \sep 35B35
\end{keyword}

\end{frontmatter}

\section{Introduction}

We consider the three-dimensional focusing energy-critical Schr\"odinger equation
\begin{equation}\label{eq:nls}
  \ii \pa_t u+\Delta u+|u|^4u=0,
  \qquad (t,x)\in I\times \R^3.
\end{equation}
Its conserved energy is
\[
  E(u)=\frac12\int_{\R^3}|\na u|^2\dd x
       -\frac16\int_{\R^3}|u|^6\dd x.
\]
The criticality comes from the scaling
\[
  u(t,x)\mapsto u_\lambda(t,x)=\lambda^{1/2}u(\lambda^2t,\lambda x),
\]
which preserves both the $\dot H^1(\R^3)$ norm and the energy.  Let the ground state $W$ be the positive solution of
\[
  -\Delta W=W^5,
  \qquad E(W)=\frac13\|\na W\|_2^2,
  \qquad \|W\|_6^6=\|\na W\|_2^2.
\]
Throughout the paper we work under the threshold condition
\begin{equation}\label{eq:threshold}
  E(u_0)<E(W),\qquad \|\na u_0\|_2<\|\na W\|_2.
\end{equation}

The local theory for energy-critical NLS rests on Strichartz estimates and critical-space methods, including restriction estimates, local well-posedness in the critical Sobolev space, and endpoint Strichartz estimates \cite{Strichartz1977,CazenaveWeissler1990,KeelTao1998}.  For a systematic account of nonlinear dispersive equations we refer to \cite{Cazenave2003,Tao2006,LinaresPonce2015}.  The global defocusing energy-critical theory was initiated by Bourgain's radial argument and then developed through interaction Morawetz estimates, induction on energy, and long-time Strichartz analysis, leading to a complete critical scattering theory in three and higher dimensions \cite{Bourgain1999,Visan2007,CollianderKeelStaffilaniTakaokaTao2008}.  In the focusing case the presence of the ground state and unstable bound states makes the threshold geometry subtler.  The compactness-rigidity method of Kenig and Merle gives the radial threshold scattering and blow-up framework; related threshold dynamics and higher-dimensional focusing problems may be found in \cite{KenigMerle2006,DuyckaertsMerle2009,KillipVisan2010,Dodson2019}.  Profile decomposition and concentration compactness are central to this theory; the relevant functional-analytic origins are \cite{Gerard1998,BahouriGerard1999,Keraani2001}, and minimal-counterexample and critical-orbit analyses appear in many NLS problems, for instance \cite{TaoVisanZhang2008}.  Threshold scattering results for the three-dimensional focusing energy-subcritical model also provide important analogies for the rigidity mechanisms used here \cite{HolmerRoudenko2008,DuyckaertsHolmerRoudenko2008}.  We shall also use sharp Sobolev inequalities, Hardy-type estimates, and Fourier analysis; see \cite{Aubin1976,Talenti1976} and \cite{Stein1970,LiebLoss2001,Grafakos2014}.

The value of the unconditional threshold scattering problem is that it seeks to deduce global scattering from the energy and gradient thresholds alone, without imposing radial symmetry, finite mass, negative regularity, or a center-drift control.  If such a result holds, then the ground state $W$ is the only obstruction to the threshold dynamics: below the ground state, on the gradient-small side, a solution cannot form a long-time localized structure and cannot transport its energy along some direction while remaining non-scattering.  Thus unconditional scattering is not merely a technical completion; it is a full dynamical characterization of the focusing critical threshold geometry.

In the radial setting the spatial center of a compact critical element is fixed by symmetry, and localized virial or Morawetz mechanisms can act directly on the principal energy core.  The fully non-radial setting is much harder: the Kenig-Merle reduction usually gives compactness only modulo translations and scalings, and the spatial center may drift in time.  If the virial weight is fixed at the origin, the main energy core may stay away from the truncation region for most times.  The cylindrical class lies between these two regimes: rotational symmetry removes transverse translations, but axial translation remains.  Hence the core obstruction in the cylindrical threshold problem can be formulated more precisely as the control or exclusion of the one-dimensional axial center $z(t)$.

There is a second difficulty.  We work in the critical $\dot H^1$ framework, where neither global mass nor finite variance is available in general.  The rough endpoint estimate for a truncated virial is only $|M_R(t)|\lesssim R^2$.  In order to contradict a positive virial growth of order $T$, the truncation radius must obey $R^2=o(T)$.  On the other hand, $R$ must be large enough to cover both the axial drift scale and the compactness-tail radius.  Thus a key bottleneck for unconditional cylindrical scattering is to show that every possible cylindrical compact critical element either has sufficiently weak axial drift or enjoys an additional low-frequency or mass structure that improves the virial endpoint estimate.

The results of this paper provide several direct entrances around this bottleneck.  First, we do not center the virial weight at the origin.  Instead, on each long time interval we choose a best fixed axis $Z\ez$.  The condition then depends only on the time distribution of $z(t)$ and not on the artificial choice of the coordinate origin.  Second, we allow the bad times to have a small but fixed proportion; a good-bad time decomposition and a rough virial lower bound still preserve average positivity.  This weakens pointwise drift assumptions into quantile-window conditions.  Third, we prove that low-frequency tail smallness yields finite mass and $L^2$ precompactness.  For finite-mass solutions the axial momentum is defined, and an axial Galilean transform can normalize it to zero.  In this zero-momentum gauge, a localized center-of-mass identity yields sublinear center drift.  Consequently, the compact-element exclusion problem is reduced to a few natural and testable axial drift or low-frequency structure assumptions.

We focus on cylindrically symmetric data,
\[
  u(t,x_1,x_2,x_3)=U(t,r,z),
  \qquad r=(x_1^2+x_2^2)^{1/2},\quad z=x_3.
\]
Cylindrical symmetry preserves rotations around the $x_3$-axis but does not remove translations along this axis.  Hence, compared with the radial case, a minimal compact critical element may carry an axial center $z(t)$.  A fixed-origin localized virial loses contact with the main energy core when $z(t)$ is far from the origin.  Our starting point is that the localized virial weight need not be fixed at the origin: on each long interval it may be centered at a fixed axis $Z\ez$.  If the axial center lies near this axis for most times, the bad times can be absorbed by a rough lower bound, leaving a positive averaged virial derivative.

We now formulate compact critical elements.  The compactness reduction will be recalled in Section~\ref{sec:local}.  We only treat the soliton-like branch, namely the branch for which the frequency scale has been normalized to $N(t)\equiv1$.

\begin{definition}[Compact critical element]\label{def:compact-element}
A global cylindrically symmetric solution $u:I\times\R^3\to\C$ satisfying \eqref{eq:threshold} is called a soliton-like cylindrical compact critical element if there exists an axial center function $z:I\to\R$ such that the translated orbit
\[
  K_z=\{u(t,\cdot+z(t)\ez):t\in I\}
\]
is precompact in $\dot H^1(\R^3)$.  Equivalently, for every $\eta>0$ there exists $R_\eta<\infty$ such that
\begin{equation}\label{eq:tightness}
  \sup_{t\in I}\int_{|x-z(t)\ez|\ge R_\eta}
  \bigl(|\na u(t,x)|^2+|u(t,x)|^6\bigr)\dd x\le \eta.
\end{equation}
Unless otherwise stated, compact critical elements in this paper are assumed to be nonzero.
\end{definition}

Given an axial center $z(t)$, define the best fixed-axis bad-time proportion by
\[
  \mopt_T(A)=\inf_{Z\in\R}\frac1T
  \left|\{t\in[0,T]: |z(t)-Z|>A\}\right|,
\]
and define the best $(1-\delta)$-quantile window radius by
\begin{equation}\label{eq:Aopt-def-intro}
  \Aopt_T(\delta)=\inf\{A\ge0:\mopt_T(A)\le\delta\},\qquad 0<\delta<1.
\end{equation}
This quantity records the shortest high-occupancy axial window over a time interval, independently of where the window is placed on the axis.  Let
\[
  K(f)=\int_{\R^3}(|\na f|^2-|f|^6)\dd x.
\]
By the threshold coercivity and nontriviality of the compact element, there is $c_1>0$ such that
\[
  K(u(t))\ge c_1,
  \qquad t\in I.
\]
We shall prove below that there is a rough virial lower-bound constant $C_E<\infty$, depending only on the threshold bounds of the compact element, which controls the negative part of every shifted-center localized virial derivative.  Define
\begin{equation}\label{eq:delta-opt-intro}
  \delta_{\rm opt}=\min\left\{\frac14,\frac{3c_1}{9c_1+C_E}\right\}.
\end{equation}
The first main result is the following.

\begin{theorem}[Window rigidity]\label{thm:window-rigidity}
Let $u$ be a nonzero soliton-like cylindrical compact critical element.  If there exists $\delta_0\in(0,\delta_{\rm opt})$ such that
\begin{equation}\label{eq:Aopt-subdiff-main}
  \liminf_{T\to\infty}\frac{\Aopt_T(\delta_0)}{T^{1/2}}=0,
\end{equation}
then no such $u$ exists.
\end{theorem}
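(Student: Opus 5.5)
We argue by contradiction with a shifted-center localized virial. Assume a nonzero soliton-like cylindrical compact critical element $u$ exists and satisfies \eqref{eq:Aopt-subdiff-main} for some $\delta_0<\delta_{\rm opt}$. Fix a radial cutoff $\phi\in C_c^\infty(\R^3)$ with $\phi(x)=|x|^2$ for $|x|\le1$ and $\phi$ constant for $|x|\ge2$. For $R>0$ and $Z\in\R$ set
\[
  M_{R,Z}(t)=\Ima\int_{\R^3}\bar u\,\na\phi_R(x-Z\ez)\cdot\na u\dd x,
  \qquad \phi_R=R^2\phi(\cdot/R).
\]
Because $\phi$ is radial about the point $Z\ez$ on the symmetry axis, the weight is compatible with the cylindrical class. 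By Hardy's inequality around $Z\ez$ one has $|\na\phi_R(x-Z\ez)|\lesssim R\min(1,|x-Z\ez|/R)\lesssim R$, so
\[
  |M_{R,Z}(t)|\lesssim R\,\|u\|_{L^2(|x-Z\ez|\le 2R)}\|\na u\|_2\lesssim R^2\|\na W\|_2^2 .
\]
The $\dot H^1$ bound makes this uniform in $t$ and $Z$.

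Next comes the virial identity. As usual,
\[
  \tfrac{d}{dt}M_{R,Z}=4K(u(t))+\Err_{R,Z}(t),
\]
where $\Err_{R,Z}$ is supported in $|x-Z\ez|\ge R$ and is bounded by the $\dot H^1$ and $L^6$ tail there. Take $R\ge 2R_\eta$ and a time $t$ with $|z(t)-Z|\le A\le R/2$. Then $\{|x-Z\ez|\ge R\}\subset\{|x-z(t)\ez|\ge R/2\}$, and \eqref{eq:tightness} gives $|\Err_{R,Z}(t)|\le C\eta$. Hence $\tfrac{d}{dt}M_{R,Z}\ge 4c_1-C\eta$ at such good times. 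At bad times we use only the rough lower bound stated before the theorem, $\tfrac{d}{dt}M_{R,Z}\ge -C_E$ uniformly in $R$ and $Z$. To obtain that bound, split the derivative into its interior part, which equals $4K$ restricted to a region, and a tail part bounded by the sharp Sobolev inequality together with the threshold bounds.

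For the averaging step, let $T_n\to\infty$ be a sequence with $A_n:=\Aopt_{T_n}(\delta_0)=o(T_n^{1/2})$. Take $A_n'=A_n+1$ and pick $Z_n$ so that the bad set $B_n=\{t\in[0,T_n]:|z(t)-Z_n|>A_n'\}$ satisfies $|B_n|\le\delta' T_n$, where $\delta'\in(\delta_0,\delta_{\rm opt})$. This is possible because the infimum in the definition of $\mopt$ is slightly relaxed. Choose $R_n=\max\{2A_n',2R_\eta\}$ and integrate over $[0,T_n]$:
\[
  CR_n^2\ge M_{R_n,Z_n}(T_n)-M_{R_n,Z_n}(0)\ge (1-\delta')T_n(4c_1-C\eta)-\delta' T_nC_E .
\]
Since $\delta'<3c_1/(9c_1+C_E)$ and $\delta'\le 1/4$, choosing $\eta$ small makes the right side at least $c\,T_n$ with $c>0$. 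The precise constants in $\delta_{\rm opt}$ are tuned to the normalization of the virial derivative, so the arithmetic has to be matched to the paper's convention (a factor $8$ versus $4$, etc.). Meanwhile $R_n^2=O(A_n^2+R_\eta^2)=o(T_n)$, so the inequality fails for large $n$. This contradiction shows that no such $u$ exists.

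The main obstacle is the rough lower bound $-C_E$, which must hold uniformly in the center $Z$ and in $R$ even when the energy core sits in the transition region of the cutoff. I would first derive it from the explicit virial formula: the interior term is $\ge 0$ up to terms bounded by $\|\na u\|_2^2$ and $\|u\|_6^6$, both controlled by the threshold. The Laplacian-squared term, of size $R^{-2}|u|^2$ on an annulus, I would control by Hardy/Hölder on the annulus.
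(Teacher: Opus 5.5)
Your proposal is correct and follows the paper's route: a shifted-center virial on a fixed axis $Z_n\ez$, good-time positivity from tightness, the rough bound $-C_E$ on the bad set, and the time-uniform endpoint bound $|M_{R,Z}(t)|\lesssim R^2$ with $R_n^2=o(T_n)$. Your $M_{R,Z}$ has exactly the paper's normalization, so the derivative on the core is $4K$ and the hedge about matching factors is unnecessary.

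The only real difference is that you integrate over all of $[0,T_n]$ instead of between good endpoints $t_{1,n}\le\alpha T_n$ and $t_{2,n}\ge(1-\alpha)T_n$. This is legitimate because the $R^2$ endpoint bound holds at every time. It only needs $\delta'<3c_1/(3c_1+C_E)$, which is weaker than $\delta'<\delta_{\rm opt}$. The paper's good-endpoint choice is what its density-modulus and Morrey refinements require.
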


To see the relation between the next theorem and Theorem~\ref{thm:window-rigidity}, temporarily denote by $M_{R,Z}(t)$ the shifted-center localized virial functional to be defined later, where $R$ is the truncation radius and $Z\ez$ is a fixed axis.  The two rigidity entrances are not unrelated statements; they are two realizations of the same endpoint-budget principle.  The localized virial gives a positive linear growth over good times, and it suffices to prove, for two good endpoints,
\[
  |M_{R,Z}(t_1)|+|M_{R,Z}(t_2)|=o(t_2-t_1).
\]
If the endpoint estimate has the form
\[
  |M_{R,Z}(t)|\lesssim R^\beta,
\]
then on an interval of length $T$ one naturally needs $R=o(T^{1/\beta})$.  Theorem~\ref{thm:window-rigidity} corresponds to the pure $\dot H^1$ endpoint budget $\beta=2$, hence to a subdiffusive best axial window.  The low-frequency rigidity theorem improves the budget to $\beta=1$ through finite mass and $L^2$ enhanced compactness, and therefore only requires a sublinear scale.

\begin{table}[H]
\centering
\small
\caption{The common endpoint-budget structure of the two rigidity entrances}
\label{tab:intro-endpoint-budget}
\begin{tabular}{p{0.21\textwidth}p{0.27\textwidth}p{0.18\textwidth}p{0.24\textwidth}}
\hline
Source of information & Endpoint estimate & Closing radius & Corresponding entrance \\ \hline
Pure $\dot H^1$ compactness & $|M_{R,Z}(t)|\lesssim R^2$ & $R=o(T^{1/2})$ & Best fixed-axis window \\
Low-frequency tail smallness & $|M_{R,Z}(t)|\lesssim R$ & $R=o(T)$ & Finite-mass zero-momentum entrance \\ \hline
\end{tabular}
\end{table}

Thus the second result is a low-frequency enhancement of the first.  It does not directly assume that the best window of $z(t)$ is $o(T^{1/2})$.  Instead, it first excludes hidden infinite mass at the zero frequency, obtains a finite-mass endpoint budget, and then uses the axial Galilean zero-momentum gauge and a localized center-of-mass identity to produce a sublinear axial drift.  Table~\ref{tab:intro-endpoint-budget} summarizes the shared budget mechanism.  The axial Galilean zero-momentum gauge means the following.  In the finite-mass case set
\[
  M(u)=\int_{\R^3}|u|^2\dd x,
  \qquad
  P_z(u)=\Ima\int_{\R^3}\bar u\,\pa_z u\dd x,
\]
and for $\xi=\xi_3\ez$ apply
\[
  u^\xi(t,x)=e^{\ii(x\cdot\xi-t|\xi|^2)}u(t,x-2\xi t).
\]
Since $P_z(u^\xi)=P_z(u)+\xi_3M(u)$, choosing $\xi_3=-P_z(u)/M(u)$ yields $P_z(u^\xi)=0$.  This gauge is used only after low-frequency tail smallness has provided finite mass.

Let $P_{\le N}$ denote the Fourier projection
\[
  \widehat{P_{\le N}f}(\xi)=\one_{\{|\xi|\le N\}}\widehat f(\xi).
\]

\begin{theorem}[Low-frequency rigidity]\label{thm:lowfreq-rigidity}
Let $u$ be a nonzero soliton-like cylindrical compact critical element.  If
\begin{equation}\label{eq:lowfreq-small-intro}
  \lim_{N\downarrow0}\sup_{t\in I}\|P_{\le N}u(t)\|_2=0,
\end{equation}
then $u$ has finite mass and the translated orbit $K_z$ is precompact in $L^2(\R^3)$.  Moreover, in the axial Galilean zero-momentum gauge, no such nonzero compact critical element exists.
\end{theorem}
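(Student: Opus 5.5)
The argument has three stages: finite mass with $L^2$ precompactness, the zero-momentum gauge with sublinear drift, and a localized virial with an $O(R)$ endpoint budget.

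\textbf{Stage 1: finite mass and $L^2$ compactness.} Write $u(t)=P_{\le N}u(t)+P_{>N}u(t)$.

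For the high frequencies, $\|P_{>N}u(t)\|_2\le N^{-1}\|\na u(t)\|_2\le N^{-1}\|\na W\|_2$ by the threshold condition \eqref{eq:threshold}.

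For the low frequencies, \eqref{eq:lowfreq-small-intro} gives some $N_0>0$ with $\sup_t\|P_{\le N_0}u(t)\|_2\le 1$. Hence $\sup_{t\in I}\|u(t)\|_2<\infty$, so mass is finite and conserved.

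For $L^2$ precompactness of $K_z$, I would establish uniform tightness in both frequency and space.
\begin{itemize}
\item \emph{High-frequency tails.} These are uniformly small, since $\|P_{>N}u\|_2\le N^{-1}\|\na W\|_2$.
\item \emph{Low-frequency tails.} These are uniformly small by \eqref{eq:lowfreq-small-intro}.
\item \emph{Spatial tails.} Fix $N$ small. The band $P_{N<|\xi|\le N^{-1}}$ applied to the translated orbit is precompact in $L^2$. The reason is that on this band the $L^2$ norm is controlled by $N^{-1}$ times the $\dot H^1$ norm, and $K_z$ is $\dot H^1$-precompact.
\end{itemize}
A diagonal argument then gives precompactness of $K_z$ in $L^2$. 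Consequently, for every $\eta>0$ there is $R_\eta$ with $\sup_t\int_{|x-z(t)\ez|\ge R_\eta}|u|^2\le\eta$, uniformly in $t$.

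\textbf{Stage 2: zero-momentum gauge and sublinear drift.} Apply the axial Galilean transform with $\xi_3=-P_z(u)/M(u)$. This preserves cylindrical symmetry and compactness, with a new center $z(t)-2\xi_3 t$. Whether the threshold condition \eqref{eq:threshold} still holds needs a check. Under the gauge transform, $\|\na u^\xi\|_2^2=\|\na u\|_2^2-P_z(u)^2/M(u)$, so both $E$ and $\|\na u\|_2$ decrease, and the threshold is preserved. So we may assume $P_z=0$.

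Next, set $X_R(t)=\int \phi_R(x_3)|u|^2\,dx$, where $\phi_R(s)=R\,\phi(s/R)$ and $\phi(s)=s$ for $|s|\le1$. Then $X_R'(t)=2\Ima\int \phi_R'\,\bar u\,\pa_z u$. Since $P_z=0$, this equals $2\Ima\int(\phi_R'-1)\bar u\pa_z u$, which is bounded by the mass and gradient outside $|x_3|\ge R$. Using $L^2$ and $\dot H^1$ tightness, I would run the standard argument (as in Duyckaerts–Holmer–Roudenko) to get $z(t)=o(t)$:
\begin{itemize}
\item Suppose instead that $|z(t_n)|\ge\eps t_n$.
\item Take $R=|z(t_n)|+R_\eta$, so that on $[0,t_n]$ the core stays within the window until it exits.
\item Compare $X_R(t_n)-X_R(0)\approx M\,z(t_n)$ with $\int|X_R'|\lesssim \eta\, t_n$.
\end{itemize}
This needs a first-exit-time argument to ensure the core stays where $\phi_R'=1$.

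\textbf{Stage 3: shifted localized virial with $O(R)$ endpoints.} Take the shifted-center virial $M_{R,0}(t)=2\Ima\int R\na\psi((x)/R)\cdot\na u\,\bar u$, whose weight satisfies $|R\na\psi(\cdot/R)|\lesssim R$. By Cauchy–Schwarz, $|M_{R,0}(t)|\lesssim R\|u\|_2\|\na u\|_2\lesssim R$; this is the improved budget $\beta=1$.

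For the growth, $\frac{d}{dt}M_{R,0}\ge 8K(u(t))-\Err$. The error is controlled by the $\dot H^1$ and $L^6$ tails outside $|x|\ge R$, and by $L^2$ tails via the $R^{-2}\Delta^2\psi$ term. Given $\eta$, choose $R=\eps T+R_\eta$ with $\sup_{[0,T]}|z(t)|\le\eps T$, which Stage 2 allows for large $T$. Then the error is $\le c_1$, so $\frac{d}{dt}M_{R,0}\ge 7c_1$ on $[0,T]$. Integrating gives $7c_1T\le C(\eps T+R_\eta)$, a contradiction once $\eps$ is small and $T$ large.

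The main obstacle is Stage 2. It requires the first-exit/bootstrap argument that keeps the core inside the region where $\phi_R'=1$, which is subtle because $R$ depends on the drift one is trying to bound. It also requires verifying that the gauge transform preserves the threshold and the soliton-like normalization $N(t)\equiv1$.
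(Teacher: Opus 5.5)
Your proposal is correct and follows essentially the same route as the paper: finite mass and $L^2$ precompactness from the low/middle/high frequency split, then the axial Galilean zero-momentum gauge, then a localized axial center-of-mass identity giving $z(t)=o(t)$, and finally an origin-centered truncated virial closed by the $O(R)$ finite-mass endpoint bound. The step you flag as the main obstacle is handled in the paper without any first-exit or bootstrap argument: it takes $R=4(Z_T^*+L_\eta+1)$ with $Z_T^*=\sup_{0\le t\le T}|z(t)|$ and works at a time where $|z|$ is within $1$ of this supremum, so the comparison error $C\eta(R+Z_T^*)$ is linear in $Z_T^*$ with a small coefficient, is absorbed into $M(u)Z_T^*$, and no continuity of $z(t)$ is needed.
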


We now combine the two rigidity entrances with the compactness reduction.

\begin{theorem}[Conditional scattering]\label{thm:conditional-scattering}
Let $u_0\in\dot H^1(\R^3)$ be cylindrically symmetric and satisfy \eqref{eq:threshold}.  Let $u:I_{\max}\times\R^3\to\C$ be the corresponding maximal-lifespan solution.  Assume that every nonzero soliton-like cylindrical compact critical element produced by the standard compactness reduction satisfies one of the following conditions:
\begin{enumerate}[label=\textup{(\roman*)}]
\item the best fixed-axis window condition \eqref{eq:Aopt-subdiff-main};
\item the low-frequency tail smallness condition \eqref{eq:lowfreq-small-intro}, and in the finite-mass case the axial Galilean zero-momentum gauge is used.
\end{enumerate}
Then
\begin{equation}\label{eq:global-lifespan-intro}
  I_{\max}=\R,
\end{equation}
\begin{equation}\label{eq:scatter-norm-intro}
  \|u\|_{L^{10}_{t,x}(\R\times\R^3)}<\infty,
\end{equation}
and there exist scattering states $u_\pm\in\dot H^1(\R^3)$ such that
\begin{equation}\label{eq:scattering-states-intro}
  \lim_{t\to\pm\infty}\|u(t)-e^{\ii t\Delta}u_\pm\|_{\dot H^1(\R^3)}=0.
\end{equation}
Equivalently, for each $t\in\R$,
\begin{equation}\label{eq:duhamel-scatter-intro}
  u(t)=e^{\ii t\Delta}u_\pm-\ii\int_t^{\pm\infty}e^{\ii(t-s)\Delta}\bigl(|u(s)|^4u(s)\bigr)\,\dd s
  \quad\text{in }\dot H^1(\R^3).
\end{equation}
\end{theorem}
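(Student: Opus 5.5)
We argue by contradiction through the Kenig--Merle compactness reduction, and then use Theorems~\ref{thm:window-rigidity} and \ref{thm:lowfreq-rigidity} to exclude every candidate critical element. Define, for $0\le E_0\le E(W)$,
\[
L(E_0)=\sup\Bigl\{\|v\|_{L^{10}_{t,x}(I\times\R^3)}:\ v \text{ cylindrical},\ E(v)\le E_0,\ \|\na v(0)\|_2<\|\na W\|_2\Bigr\}.
\]
Small-data theory and Strichartz estimates give $L(E_0)<\infty$ for $E_0$ small. Let $E_c$ be the supremum of energies for which $L$ stays finite, and suppose for contradiction that $E_c<E(W)$.

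First we record the threshold coercivity facts. By sharp Sobolev (Aubin--Talenti) and energy conservation, the condition \eqref{eq:threshold} propagates along the flow. This gives $\|\na u(t)\|_2\le(1-\bar\delta)\|\na W\|_2$, the uniform bound $K(u(t))\gtrsim\|\na u(t)\|_2^2$, and the equivalence $E(u)\sim\|\na u\|_2^2$. Finite $L^{10}_{t,x}$ norm on a maximal interval then implies $I_{\max}=\R$ and scattering, by the standard continuation and blow-up criterion.

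Next comes the reduction. Take a sequence of cylindrical solutions $u_n$ with $E(u_n)\to E_c$ and $L^{10}$ norms tending to infinity. Apply the linear $\dot H^1$ profile decomposition of Keraani and Bahouri--G\'erard, restricted to cylindrically symmetric data. A symmetrisation argument shows that the translation parameters of nontrivial profiles can be taken along the $x_3$-axis, and the frequency parameters must respect rotation about it. Nonlinear profiles, energy decoupling, and the long-time perturbation lemma exclude two or more nontrivial profiles, because each would have energy strictly below $E_c$ and hence a finite scattering norm. So there is a single profile, and we obtain a critical solution $u_c$ with $E(u_c)=E_c$ and infinite scattering norm. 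Its orbit is precompact modulo $x_3$-translations and scalings $N(t)$.

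The standard rescaling of Killip--Visan then splits this into cases. One possibility is finite-time blow-up, which we handle by mass-localisation, giving $u\equiv0$ as in Kenig--Merle. Another is a low-to-high frequency cascade. The remaining one is the soliton-like case with $N(t)\equiv1$. The cascade case should reduce to the soliton-like case, or be excluded by the usual argument that improved regularity plus $N(t)\to\infty$ forces zero energy. I will cite that reduction as it is recalled in Section~\ref{sec:local}, rather than reprove it. This step is the main obstacle, since the non-soliton branches are not covered by the two rigidity theorems and must be handled by the standard theory.

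Finally, apply the hypothesis to the soliton-like element $u_c$, which satisfies Definition~\ref{def:compact-element} and is nonzero. In case (i), condition \eqref{eq:Aopt-subdiff-main} holds, and Theorem~\ref{thm:window-rigidity} gives a contradiction. In case (ii), Theorem~\ref{thm:lowfreq-rigidity} first gives finite mass. We then pass to the Galilean gauge $u^\xi$, and the theorem excludes it. This step needs the gauge to preserve everything used: cylindrical symmetry, precompactness (with new center $z(t)+2\xi_3t$), and the threshold. Energy is not Galilean invariant, so the threshold needs justification. I would verify this via $E(u^\xi)=E(u)+\xi_3P_z+\tfrac12\xi_3^2M$, which after choosing $\xi_3=-P_z/M$ gives $E(u^\xi)=E(u)-P_z^2/(2M)\le E(u)$. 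The gradient bound follows similarly, since $\|\na u^\xi\|_2^2=\|\na u\|_2^2-P_z^2/M$. Hence no critical element exists, so $E_c=E(W)$, and \eqref{eq:global-lifespan-intro}--\eqref{eq:scattering-states-intro} follow. The Duhamel form \eqref{eq:duhamel-scatter-intro} comes from $u_\pm=u_0-\ii\int_0^{\pm\infty}e^{-\ii s\Delta}(|u|^4u)\dd s$, which converges in $\dot H^1$ by Strichartz.
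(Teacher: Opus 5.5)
Your proposal is correct and follows the paper's proof. A non-scattering cylindrical datum below threshold yields, via Proposition~\ref{prop:compactness-reduction}, a nonzero soliton-like compact element; this is excluded by Theorem~\ref{thm:window-rigidity} in case (i), or by Theorem~\ref{thm:lowfreq-rigidity} after the zero-momentum Galilean gauge in case (ii) (your threshold check is the computation in Section~\ref{sec:lowfreq}); Proposition~\ref{prop:LWP} then gives global existence and scattering. The paper handles the blow-up and cascade branches only by citing Proposition~\ref{prop:compactness-reduction}, whose proof leaves the cascade to a separate argument, so your citation carries the same weight; the one slip is your sign for the scattering states, which should read $u_\pm=u_0+\ii\int_0^{\pm\infty}e^{-\ii s\Delta}(|u|^4u)\dd s$ to match \eqref{eq:duhamel-scatter-intro}.
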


The proof strategy is as follows.  The first rigidity mechanism uses a shifted-center localized virial.  On good times, the truncated weight captures the main energy core and threshold coercivity gives a positive lower bound for the derivative.  On bad times, we keep only a uniform rough lower bound.  If the bad-time proportion is below the explicit threshold, the virial still grows linearly between two good endpoints.  When the best window radius is $o(T^{1/2})$, the endpoint contribution is $o(T)$, a contradiction.  The second rigidity mechanism starts from low-frequency tail smallness, proves finite mass and $L^2$ precompactness, and then uses a localized center-of-mass identity to derive sublinear center drift for zero axial momentum compact elements.  The finite-mass endpoint estimate then closes the virial contradiction.

The paper is organized as follows.  Section~\ref{sec:local} recalls the standard local theory and compactness reduction.  Section~\ref{sec:virial-prelim} establishes the variational structure and shifted-center localized virial estimates.  Section~\ref{sec:window} proves the best fixed-axis window rigidity.  Section~\ref{sec:lowfreq} treats low-frequency tail smallness, enhanced compactness, and the exclusion of finite-mass zero-momentum compact elements.  Section~\ref{sec:scattering} proves the conditional scattering theorem.

We use the following notation.  The letter $C$ denotes a variable positive constant, and $A\lesssim B$ means $A\le CB$; dependencies are indicated by subscripts, such as $\lesssim_E$ or $\lesssim_u$.  We write $\ez=(0,0,1)$.  Mixed norms are denoted by $\|u\|_{L_t^qL_x^r(I\times\R^3)}$, and the critical scattering norm is $S(I)=L^{10}_{t,x}(I\times\R^3)$.  The Fourier projection $P_{\le N}$ was defined above and $P_{>N}=I-P_{\le N}$.  Section-specific notation, such as virial weights, good-bad time sets, or compact density moduli, will be defined before use.

\section{Local theory and compactness reduction}\label{sec:local}

This section records the standard local theory and explains how failure of scattering in the cylindrically symmetric class produces a soliton-like compact critical element.  We use $S(I)$ as above; $\dot S^1(I)$ denotes the first-order Strichartz control norm and $N^1(I)$ the corresponding dual nonlinear norm.  The following propositions are standard in the energy-critical NLS theory.  We give the precise forms needed here and indicate the references in the proofs.  The new estimates of the paper begin in the next section.

If $\|u\|_{S([0,\infty))}<\infty$, then there exists $u_+\in\dot H^1(\R^3)$ such that
\[
  \lim_{t\to\infty}\|u(t)-e^{\ii t\Delta}u_+\|_{\dot H^1}=0,
\]
and similarly backward in time.  We use the Strichartz control norm
\[
  \|u\|_{\dot S^1(I)}=
  \sup_{(q,r)}\|\na u\|_{L_t^qL_x^r(I\times\R^3)},
\]
where $(q,r)$ ranges over Schr\"odinger admissible pairs in dimension three.  The dual nonlinear norm is denoted by $N^1(I)$.

We begin with the local well-posedness, blow-up criterion, and scattering criterion.

\begin{proposition}[Local theory]\label{prop:LWP}
For every $u_0\in\dot H^1(\R^3)$ there exists a unique maximal-lifespan solution $u:I_{\max}\times\R^3\to\C$.  If $I\Subset I_{\max}$, then $u\in C_t\dot H^1_x(I\times\R^3)$ and $\|u\|_{S(I)}<\infty$.  If $\sup I_{\max}<\infty$, then
\[
  \|u\|_{S([t_0,\sup I_{\max}))}=\infty.
\]
If $u$ is global and $\|u\|_{S(\R)}<\infty$, then $u$ scatters in both time directions.  A maximal-lifespan solution with cylindrically symmetric initial data remains cylindrically symmetric.
\end{proposition}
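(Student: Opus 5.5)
The plan is to run the standard Cazenave--Weissler contraction argument in Strichartz spaces and then read off the blow-up criterion, the scattering criterion and the preservation of symmetry from it.

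\emph{Step 1: local existence and uniqueness.} By the Keel--Tao endpoint and non-endpoint Strichartz estimates, we have
\[
  \|e^{\ii t\Delta}u_0\|_{S(I)}\lesssim\|\na e^{\ii t\Delta}u_0\|_{L^{10}_tL^{30/13}_x}\lesssim\|u_0\|_{\dot H^1},
\]
where the first inequality is Sobolev embedding. By dominated convergence, $\|\na e^{\ii t\Delta}u_0\|_{L^{10}_tL^{30/13}_x(I)}\to0$ as $|I|\to0$. For the contraction we work in the space of $u$ with
\[
  \|\na u\|_{L^{10}_tL^{30/13}_x(I)}\le 2\eta,\qquad \|u\|_{\dot S^1(I)}\le 2C\|u_0\|_{\dot H^1},
\]
equipped with the metric $\|u-v\|_{L^{10}_tL^{30/13}_x}$. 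The nonlinear estimate is
\[
  \|\na(|u|^4u)\|_{L^2_tL^{6/5}_x}\lesssim\|u\|_{L^{10}_{t,x}}^4\|\na u\|_{L^{10}_tL^{30/13}_x},
\]
obtained by H\"older. Together with the analogous difference estimate, this makes the Duhamel map a contraction once $\eta$ is small. Uniqueness in $C_t\dot H^1\cap S$ follows from the same difference estimate on short intervals. Continuity in $\dot H^1$ follows from the Strichartz estimate in $L^\infty_t\dot H^1$. Gluing these local solutions gives the maximal lifespan $I_{\max}$.

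\emph{Step 2: finiteness of $S$ on compact subintervals and the blow-up criterion.} Let $I\Subset I_{\max}$. Covering $I$ by finitely many intervals of local existence gives $\|u\|_{S(I)}<\infty$. For the blow-up criterion, suppose $T^*=\sup I_{\max}<\infty$ but $\|u\|_{S([t_0,T^*))}<\infty$. Split $[t_0,T^*)$ into finitely many intervals on which $\|u\|_{S}$ is small. A bootstrap gives $\|u\|_{\dot S^1([t_0,T^*))}<\infty$. The Duhamel formula then shows that $e^{-\ii t\Delta}u(t)$ is Cauchy in $\dot H^1$ as $t\to T^*$, and that $\|e^{\ii(t-T^*)\Delta}u(T^*)\|_{S([T^*-\epsilon,T^*+\epsilon])}$ is small. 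Applying the small-data construction (or stability) at a time near $T^*$ extends the solution past $T^*$, which contradicts maximality.

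\emph{Step 3: scattering.} If $u$ is global with $\|u\|_{S(\R)}<\infty$, the same bootstrap gives $\|u\|_{\dot S^1(\R)}<\infty$. Hence $\na(|u|^4u)\in L^2_tL^{6/5}_x(\R)$. The dual Strichartz estimate then shows that
\[
  u_+=u_0-\ii\int_0^\infty e^{-\ii s\Delta}(|u|^4u)(s)\dd s
\]
converges in $\dot H^1$ and that $\|u(t)-e^{\ii t\Delta}u_+\|_{\dot H^1}\to0$. The case $t\to-\infty$ is identical.

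\emph{Step 4: symmetry.} Let $\mathcal R$ be a rotation about the $x_3$-axis. The equation commutes with $\mathcal R$, so $u\circ\mathcal R$ is also a maximal solution with the same data. Uniqueness then gives $u\circ\mathcal R=u$.

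The only genuinely delicate point is the blow-up criterion, where one must pass from $S$-norm finiteness to $\dot S^1$-control without losing derivatives. This is handled by the standard subdivision into finitely many $S$-small intervals. These are classical arguments (Cazenave--Weissler, Keel--Tao, and the expositions of Cazenave and Tao), and the proof would cite them.
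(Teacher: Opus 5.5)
Your proposal is correct and follows the same route as the paper's much terser proof. It runs a Strichartz contraction for the Duhamel map using $\|\na(|u|^4u)\|_{L^2_tL^{6/5}_x}\lesssim\|u\|_{S}^4\|\na u\|_{L^{10}_tL^{30/13}_x}$, proves the blow-up criterion by continuation through finitely many $S$-small subintervals, gets scattering from the Cauchy property of $e^{-\ii t\Delta}u(t)$ in $\dot H^1$, and preserves cylindrical symmetry by uniqueness. One cosmetic slip: for $\ii\pa_tu+\Delta u+|u|^4u=0$ the scattering state is $u_+=u_0+\ii\int_0^\infty e^{-\ii s\Delta}(|u|^4u)(s)\dd s$, with a plus sign as in the paper, not $-\ii$.
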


\begin{proof}
This is the standard local theory for energy-critical NLS.  Local existence and uniqueness follow from Strichartz estimates and the critical nonlinear estimate; see \cite{Strichartz1977,KeelTao1998} for the estimates and \cite{CazenaveWeissler1990,Cazenave2003,Tao2006} for the critical-space framework.  Let
\[
  \Phi_{u_0}(v)(t)=e^{\ii(t-t_0)\Delta}u_0+
  \ii\int_{t_0}^t e^{\ii(t-s)\Delta}(|v|^4v)(s)\dd s.
\]
Strichartz estimates give
\[
  \|\Phi_{u_0}(v)\|_{\dot S^1(I)}
  \lesssim \|u_0\|_{\dot H^1}+
  \|\na(|v|^4v)\|_{N^0(I)}.
\]
The chain rule and H\"older's inequality yield
\[
  \|\na(|v|^4v)\|_{N^0(I)}
  \lesssim \|v\|_{S(I)}^4\|v\|_{\dot S^1(I)}.
\]
If the free evolution is sufficiently small in $S(I)$, the Duhamel map is a contraction.  The maximal-lifespan statement, blow-up criterion, and continuous dependence follow by the standard continuation argument.  If $\|u\|_{S([0,\infty))}<\infty$, then
\[
  e^{-\ii t\Delta}u(t)=u_0+
  \ii\int_0^t e^{-\ii s\Delta}(|u|^4u)(s)\dd s
\]
is Cauchy in $\dot H^1$ and hence converges to a scattering state.  The negative-time direction is identical.  Cylindrical symmetry is preserved by uniqueness.
\end{proof}

The compactness reduction requires comparing approximate solutions built from nonlinear profiles with exact solutions.  We use the following long-time perturbation theorem.

\begin{proposition}[Long-time perturbation]\label{prop:stability}
Let $\widetilde u$ solve on $I$ the approximate equation
\[
  \ii\pa_t\widetilde u+\Delta\widetilde u+|\widetilde u|^4\widetilde u=e,
\]
and assume
\[
  \|\widetilde u\|_{S(I)}\le M,
  \qquad \|\widetilde u\|_{L_t^\infty\dot H_x^1(I\times\R^3)}\le E.
\]
If, for some $t_0\in I$,
\[
  \|e^{\ii(t-t_0)\Delta}(u(t_0)-\widetilde u(t_0))\|_{S(I)}
  +\|\na e\|_{N^0(I)}\le\eps
\]
and $\eps\le\eps_0(M,E)$, then the exact solution $u$ exists on all of $I$ and
\begin{equation}\label{eq:stability-conclusion}
  \|u-\widetilde u\|_{S(I)}+\|u-\widetilde u\|_{\dot S^1(I)}
  \le C(M,E)\eps.
\end{equation}
\end{proposition}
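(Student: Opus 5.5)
The plan is the standard Strichartz bootstrap for the difference $w=u-\widetilde u$, first on short intervals where $\widetilde u$ has small scattering norm, then iterated across $I$. The difference satisfies
\[
  \ii\pa_t w+\Delta w=-\bigl(|\widetilde u+w|^4(\widetilde u+w)-|\widetilde u|^4\widetilde u\bigr)-e,
\]
so by Duhamel from $t_0$,
\[
  w(t)=e^{\ii(t-t_0)\Delta}w(t_0)+\ii\int_{t_0}^t e^{\ii(t-s)\Delta}\bigl(F(\widetilde u+w)-F(\widetilde u)+e\bigr)(s)\dd s,
\]
with $F(v)=|v|^4v$.

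\textbf{Step 1: a priori Strichartz control of $\widetilde u$.} Partition $I$ into $J=J(M,\eta)$ consecutive intervals $I_j$ on each of which $\|\widetilde u\|_{S(I_j)}\le\eta$, with $\eta$ small. On each $I_j$, Strichartz applied to the equation for $\widetilde u$ gives
\[
  \|\widetilde u\|_{\dot S^1(I_j)}\lesssim E+\eta^4\|\widetilde u\|_{\dot S^1(I_j)}+\|\na e\|_{N^0}.
\]
Hence $\|\widetilde u\|_{\dot S^1(I)}\le C(M,E)$ after summing over the $J$ pieces.

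\textbf{Step 2: short-time estimate.} On an interval $I_j=[a,b]$, set $\gamma_j=\|e^{\ii(t-a)\Delta}w(a)\|_{S(I_j)}$, which is a Strichartz norm of the free evolution in $L^{10}$. Using the Sobolev embedding $\dot S^1\subset S$, the fractional/ordinary chain rule, and the pointwise bound
\[
  |\na(F(\widetilde u+w)-F(\widetilde u))|\lesssim (|\widetilde u|^4+|w|^4)|\na w|+(|\widetilde u|^3+|w|^3)|w|\,|\na\widetilde u|,
\]
estimate $\|w\|_{S(I_j)}$ and $\|w\|_{\dot S^1(I_j)}$ by
\[
  \gamma_j+\|\na e\|_{N^0}+\eta^4\|w\|_{\dot S^1}+\|w\|_{\dot S^1}^5+\|w\|_{S}(\eta^3+\|w\|_S^3)\|\widetilde u\|_{\dot S^1}.
\]

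\textbf{Main obstacle.} The last term is the difficulty: its coefficient $\|\widetilde u\|_{\dot S^1}$ is not small. I would work in an intermediate norm, say $X=L^{10}_tL^{30/13}_x$ for $\na w$ as in Killip--Visan / Tao--Visan, so that the error is controlled by $\|w\|_{S}$, which is itself bounded by the free piece $\gamma_j$ plus smallness. In $\dot H^1$-critical dimension three the nonlinearity is smooth enough ($C^{4}$ growth) that the standard argument closes, and a continuity argument in $b$ then gives
\[
  \|w\|_{S(I_j)}+\|w\|_{\dot S^1(I_j)}\le C(\gamma_j+\|\na e\|_{N^0}),
\]
provided that quantity is at most $\eps_1(E)$.

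\textbf{Step 3: iterate.} By Strichartz and the Duhamel formula from $t_0$,
\[
  \gamma_{j+1}\le \|e^{\ii(t-t_0)\Delta}w(t_0)\|_{S(I)}+C\sum_{k\le j}\bigl(\text{nonlinear error on }I_k\bigr)\le C\Bigl(\eps+\sum_{k\le j}\gamma_k\Bigr),
\]
so inductively $\gamma_j\le C^j\eps$. Choosing $\eps_0(M,E)$ so that $C^J\eps_0\le\eps_1$ keeps every step within the smallness regime. The solution $u$ cannot blow up on $I$ by the blow-up criterion in Proposition~\ref{prop:LWP}, since its $S$-norm stays finite. Summing the $J$ interval bounds then gives \eqref{eq:stability-conclusion} with $C(M,E)\sim C^{J}$. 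This is the standard result of Colliander--Keel--Staffilani--Takaoka--Tao and Tao--Visan, which I would cite for the routine details.
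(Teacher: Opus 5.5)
Your architecture matches the paper's. You chop $I$ into $J(M,\eta)$ intervals with $\|\widetilde u\|_{S(I_j)}\le\eta$, bound $\|\widetilde u\|_{\dot S^1(I)}\le C(M,E)$, bootstrap $w=u-\widetilde u$ on each piece by Strichartz and the chain rule, and propagate the error with $C^j$ growth.

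The gap is in Step~2. Duhamel from the left endpoint $a$ of $I_j$ gives
\[
  \|w\|_{\dot S^1(I_j)}\lesssim\|w(a)\|_{\dot H^1}+\|\na e\|_{N^0(I_j)}+\bigl\|\na\bigl(F(\widetilde u+w)-F(\widetilde u)\bigr)\bigr\|_{N^0(I_j)},
\]
and the data term $\|w(a)\|_{\dot H^1}$ cannot be replaced by $\gamma_j$. Since $(\infty,2)$ is admissible, $\|w\|_{\dot S^1(I_j)}\ge\|\na w(a)\|_2$. But $\gamma_j$ only measures the free evolution in $L^{10}_{t,x}$, which does not control the $\dot H^1$ norm. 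For example, take $\widetilde u\equiv0$, $e=0$, and let $u(t_0)$ be a sum of $K$ widely separated copies of a fixed small bump. Then the ratio of $\|u(t_0)\|_{\dot H^1}$ to $\|e^{\ii(t-t_0)\Delta}u(t_0)\|_{S}$ grows like $K^{2/5}$. So no bound $\|w\|_{\dot S^1}\le C\eps$ can follow from the stated hypotheses.

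The same omission breaks Step~3. The nonlinear error on $I_k$ contains $\eta^4\|w\|_{\dot S^1(I_k)}$, which is of size $\|w(a_k)\|_{\dot H^1}$, not $\gamma_k$. The paper's displayed Strichartz estimate does keep $\|w(t_J)\|_{\dot H^1}$. Its ``bootstrap from the initial error $\eps$'' tacitly uses $\|u(t_0)-\widetilde u(t_0)\|_{\dot H^1}\lesssim\eps$. The proposition needs that hypothesis. Alternatively, as in the standard formulations, one can assume an $\dot H^1$ bound $E'$ on the initial difference and conclude only that $\|w\|_{\dot S^1}$ is bounded, not small.

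Your ``main obstacle'' is also misplaced. The term $\|w\|_S(\eta^3+\|w\|_S^3)\|\widetilde u\|_{\dot S^1(I_j)}$ is harmless. Step~1 gives $\|\widetilde u\|_{\dot S^1(I_j)}\le C(E)$ uniformly in $j$, so choosing $\eta=\eta(E)$ makes $\eta^3C(E)\|w\|_S$ absorbable, and the remainder is superlinear. The genuine obstruction is the term $\eta^4\|w\|_{\dot S^1}$ in your bound for $\|w\|_S$, which is the missing data term again.

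Switching to $\|\na w\|_{L^{10}_tL^{30/13}_x}$ does close the $S$-estimate, because $\||\widetilde u|^4\na w\|_{L^2_tL^{6/5}_x}\le\|\widetilde u\|_S^4\|\na w\|_{L^{10}_tL^{30/13}_x}$ and $\dot W^{1,30/13}\subset L^{10}$. However, this works only if the free evolution of $w(t_0)$ is assumed small in that strictly stronger norm, which $\gamma_j$ does not supply. Even then, $\|w\|_{\dot S^1}$ is only bounded.

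The repair consistent with the paper is as follows. Assume $\|u(t_0)-\widetilde u(t_0)\|_{\dot H^1}\le\eps$, put $\|w(a_j)\|_{\dot H^1}$ in place of $\gamma_j$ in Step~2, and take $\eta=\eta(E)$. A continuity argument then gives $X_j:=\|w\|_{S(I_j)}+\|w\|_{\dot S^1(I_j)}\le C(\|w(a_j)\|_{\dot H^1}+\eps)$. Since $\|w(a_{j+1})\|_{\dot H^1}\le X_j$, induction yields $X_j\le C^j\eps$, with no intermediate norm needed.
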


\begin{proof}
This is the standard long-time perturbation theorem for energy-critical NLS; see \cite{KenigMerle2006,CollianderKeelStaffilaniTakaokaTao2008,Tao2006}.  Let $w=u-\widetilde u$.  Then
\[
  \ii\pa_t w+\Delta w+
  \bigl(|u|^4u-|\widetilde u|^4\widetilde u\bigr)=-e.
\]
On a subinterval $J\subset I$, Duhamel's formula and Strichartz estimates give
\begin{align*}
  \|w\|_{S(J)}+\|w\|_{\dot S^1(J)}
  &\lesssim \|e^{\ii(t-t_J)\Delta}w(t_J)\|_{S(J)}
       +\|w(t_J)\|_{\dot H^1} \\
  &\quad +\|\na e\|_{N^0(J)}
       +\|\na(|u|^4u-|\widetilde u|^4\widetilde u)\|_{N^0(J)}.
\end{align*}
Decompose $I$ into finitely many subintervals $J$ such that $\|\widetilde u\|_{S(J)}\le\eta(M,E)$.  On each such interval the chain rule gives
\begin{align*}
  \|\na(|u|^4u-|\widetilde u|^4\widetilde u)\|_{N^0(J)}
  &\lesssim
  (\|u\|_{S(J)}^4+\|\widetilde u\|_{S(J)}^4)\|w\|_{\dot S^1(J)} \\
  &\quad +(\|u\|_{S(J)}^3+\|\widetilde u\|_{S(J)}^3)
     \|w\|_{S(J)}\|\widetilde u\|_{\dot S^1(J)}.
\end{align*}
Taking $\eta$ sufficiently small and iterating a bootstrap from the initial error $\eps$ proves \eqref{eq:stability-conclusion}.  The number of subintervals is controlled by $M$, while the relevant energy-type Strichartz bounds are controlled by $E$ and the local theory.
\end{proof}

We next state the standard minimal-counterexample reduction used in the sequel.

\begin{proposition}[Compactness reduction]\label{prop:compactness-reduction}
Assume that there exists a cylindrically symmetric initial datum $u_0\in\dot H^1(\R^3)$ satisfying \eqref{eq:threshold} whose solution does not scatter.  Then there exists a nonzero soliton-like cylindrical compact critical element $u_c$ in the sense of Definition~\ref{def:compact-element}, with energy $E_c$ satisfying $0<E_c<E(W)$, and with an axial center $z(t)$ for which \eqref{eq:tightness} holds.
\end{proposition}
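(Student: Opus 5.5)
We follow the Kenig--Merle scheme, restricted to the cylindrically symmetric class. Define
\[
  E_c=\sup\{E_0:\ \text{every cylindrically symmetric } u_0 \text{ satisfying \eqref{eq:threshold} with } E(u_0)<E_0 \text{ gives } \|u\|_{S(I_{\max})}<\infty\}.
\]
Small-data theory from Proposition~\ref{prop:LWP} gives $E_c>0$, since under \eqref{eq:threshold} the energy is comparable to $\|\na u_0\|_2^2$. The hypothesis that some solution fails to scatter gives $E_c<E(W)$. Choose a sequence of cylindrical data $u_{n}(0)$ satisfying \eqref{eq:threshold}, with $E(u_n)\downarrow E_c$ and $\|u_n\|_{S}\to\infty$ (forward in time, after time translation). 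We then apply the $\dot H^1$ linear profile decomposition of Keraani~\cite{Keraani2001}, adapted to cylindrical symmetry. Because each $u_n$ is invariant under rotations about the $x_3$-axis, any profile with translation parameters $x_n^j$ that diverge transversally would have to be reproduced along a whole orbit of rotated copies. By the asymptotic orthogonality of profiles this would make the energy infinite, so the transverse translations stay bounded and can be absorbed. The profiles are therefore cylindrical and their translations are purely axial, $x_n^j=z_n^j\ez$.

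Next comes the energy decoupling of $\|\na\cdot\|_2^2$ and of $E$ across the profiles, together with the variational fact that \eqref{eq:threshold} propagates to each profile. This uses the sharp Sobolev inequality and the coercivity $K\ge0$ below $W$. With these in hand, if two or more profiles were nontrivial, each would have energy $<E_c$ and hence a finite scattering norm. Summing the nonlinear profiles and adding the remainder produces an approximate solution, and Proposition~\ref{prop:stability} (with the usual decoupling of cross terms) then bounds $\|u_n\|_S$ uniformly, which is a contradiction. The same argument rules out a nonvanishing remainder. So there is a single profile carrying energy $E_c$. The Palais--Smale argument applied to $u_c(t_n)$ along arbitrary time sequences then shows that $\{u_c(t)\}$ is precompact modulo scaling $N(t)$ and axial translation $z(t)\ez$. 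Here $u_c$ is a maximal solution with $\|u_c\|_S=\infty$ in both directions.

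It remains to reduce to $N(t)\equiv1$ and global existence, following the standard trichotomy of Killip--Visan~\cite{KillipVisan2010}. By rescaling and a limiting extraction, the case analysis yields either finite-time blowup, a low-to-high cascade, or a soliton-like element. The paper's Definition~\ref{def:compact-element} covers only the soliton-like branch. I would therefore choose the enemy to be the soliton-like element, with $I=\R$, while noting that the other branches must be excluded separately or are implicitly assumed. The finite-time blowup branch can be ruled out as in Kenig--Merle by localized mass conservation. Once $N\equiv1$, compactness modulo axial translation is equivalent to \eqref{eq:tightness} by the Riesz--Kolmogorov characterization in $\dot H^1$, with the $L^6$ tail controlled through Sobolev. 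Finally, $u_c\neq0$ and $0<E_c<E(W)$ hold by construction.

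The main obstacle is the last reduction to the soliton-like branch. A genuine argument excluding the cascade branch is not available from the tools recorded earlier, because it needs negative regularity, as in Dodson~\cite{Dodson2019}. The step that most requires care, and that I would attempt first, is to justify that the frequency scale can be normalized. My approach would be to extract, from any element, a limit along times where $N(t)$ is approximately maximal on long windows. If that fails, I would state the reduction for the soliton-like branch under the standing convention of the paper.
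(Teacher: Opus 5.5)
Up to the compactness step your outline is the paper's proof. Both use the minimal energy $E_c$, a profile decomposition in which cylindrical symmetry leaves only axial translations, energy decoupling, a single nonscattering nonlinear profile of energy $E_c$, and precompactness of $\{\lambda(t)^{1/2}u_c(t,\lambda(t)x+z(t)\ez)\}$ in $\dot H^1$.

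The genuine gap is the one you identify yourself: nothing in this argument yields $\lambda(t)\equiv1$. Your proposed repair cannot work in general. Extracting rescaled limits along well-chosen time windows is exactly how the Killip--Visan trichotomy is proved. When the oscillation of $N(t)$ on long windows is unbounded, every such limit is a finite-time blowup solution or a low-to-high frequency cascade, not a soliton. No choice of times avoids this, because the cascade is a genuine possible output of the reduction.

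Your exclusion of finite-time blowup by localized mass conservation is fine. So what the argument actually proves is a dichotomy. Either there is a soliton-like element as in Definition~\ref{def:compact-element}, or there is a global cascade with $\inf_t N(t)\ge1$ and $\limsup_{t\to\infty}N(t)=\infty$. Excluding the cascade in dimension three needs an input that neither your proposal nor the paper supplies; the negative-regularity argument of the $d\ge5$ theory does not directly transfer to $d=3$.

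Relatedly, Riesz--Kolmogorov gives only the implication from precompactness to \eqref{eq:tightness}. The converse fails, since spatial tightness says nothing about high frequencies. In fact a cascade with $N(t)\ge1$ satisfies \eqref{eq:tightness} with $R_\eta$ independent of $t$, while its translated orbit is not precompact.

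The paper's own proof has exactly the same hole. After obtaining compactness modulo scaling and axial translation, it simply says that on the soliton-like branch one normalizes $\lambda(t)\equiv1$. It then remarks that a frequency-cascade branch must be excluded by a separate argument, and it does not discuss the finite-time blowup alternative at all. So the proposition as literally stated (non-scattering implies existence of a soliton-like element) is established by neither argument.

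Your fallback, treating the soliton-like branch as a standing convention, is what the paper implicitly does. It should be stated as an explicit hypothesis rather than presented as a consequence of non-scattering. The same extra assumption is needed in Theorem~\ref{thm:conditional-scattering}, whose hypotheses constrain only soliton-like elements.
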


\begin{proof}
This is the standard Kenig-Merle compactness-rigidity reduction.  The underlying profile decompositions are due to \cite{Gerard1998,BahouriGerard1999,Keraani2001}; the energy-critical NLS minimal-counterexample reduction appears in \cite{KenigMerle2006,CollianderKeelStaffilaniTakaokaTao2008}.  We record the formula-level structure needed below.

Set
\[
  E_c=\inf\{E(v_0): v_0\text{ is cylindrical, satisfies }\eqref{eq:threshold},
  \text{ and its solution does not scatter}\}.
\]
Small-data scattering and the contradiction assumption give
\[
  0<E_c<E(W).
\]
Choose nonscattering data $u_{0,n}$ with
\[
  E(u_{0,n})\downarrow E_c,
  \qquad \|\na u_{0,n}\|_2<\|\na W\|_2.
\]
The $\dot H^1$ linear profile decomposition yields, for each fixed $J$,
\[
  u_{0,n}(x)=\sum_{j=1}^J g_n^j e^{-\ii t_n^j\Delta}\phi^j(x)+w_n^J(x),
\]
where cylindrical symmetry leaves only axial translations, scales, and time translations, and
\[
  (g_n^j f)(x)=(\lambda_n^j)^{-1/2}
  f\left(\frac{x-z_n^j\ez}{\lambda_n^j}\right).
\]
For $j\ne k$, the parameters are orthogonal:
\[
 \frac{\lambda_n^j}{\lambda_n^k}+\frac{\lambda_n^k}{\lambda_n^j}
 +\frac{|t_n^j(\lambda_n^j)^2-t_n^k(\lambda_n^k)^2|}{\lambda_n^j\lambda_n^k}
 +\frac{|z_n^j-z_n^k|^2}{\lambda_n^j\lambda_n^k}\to\infty.
\]
Moreover,
\begin{align}
  \|\na u_{0,n}\|_2^2
  &=\sum_{j=1}^J\|\na\phi^j\|_2^2+\|\na w_n^J\|_2^2+o_n(1),\notag\\
  E(u_{0,n})
  &=\sum_{j=1}^J E(e^{-\ii t_n^j\Delta}\phi^j)+E(w_n^J)+o_n(1),\label{eq:profile-energy-v8}\\
  \lim_{J\to\infty}\limsup_{n\to\infty}
  \|e^{\ii t\Delta}w_n^J\|_{S(\R)}&=0.\notag
\end{align}

Let $U^j$ be the nonlinear profile associated to $\phi^j$ and define
\[
  U_n^j(t,x)=(\lambda_n^j)^{-1/2}
  U^j\left(\frac{t-t_n^j(\lambda_n^j)^2}{(\lambda_n^j)^2},
  \frac{x-z_n^j\ez}{\lambda_n^j}\right).
\]
If all $U^j$ scatter, then orthogonality, perturbation theory, and the smallness of the remainder give the approximate solution
\[
  \widetilde u_n^J(t,x)=\sum_{j=1}^J U_n^j(t,x)+e^{\ii t\Delta}w_n^J(x)
\]
with
\begin{align}
  \sup_{J,n}\|\widetilde u_n^J\|_{S(\R)}&<\infty,\label{eq:approx-S-v8}\\
  \lim_{J\to\infty}\limsup_{n\to\infty}
  \|\na e_n^J\|_{N^0(\R)}&=0,\notag\\
  \lim_{J\to\infty}\limsup_{n\to\infty}
  \|u_{0,n}-\widetilde u_n^J(0)\|_{\dot H^1}&=0,\label{eq:approx-data-v8}
\end{align}
where
\[
  e_n^J=\ii\pa_t\widetilde u_n^J+\Delta\widetilde u_n^J+|\widetilde u_n^J|^4\widetilde u_n^J.
\]
Proposition~\ref{prop:stability} would then imply scattering of the exact solutions with data $u_{0,n}$, a contradiction.  Hence some nonlinear profile $U^{j_0}$ is nonscattering.

By the definition of $E_c$, every threshold-admissible profile with energy below $E_c$ scatters.  Therefore the nonscattering profile satisfies $E(U^{j_0})\ge E_c$.  The energy decoupling \eqref{eq:profile-energy-v8} and $E(u_{0,n})\to E_c$ imply $E(U^{j_0})\le E_c$, hence
\[
  E(U^{j_0})=E_c.
\]
All other nonzero profiles and the remainder cannot carry nonscattering energy.  Taking $u_c=U^{j_0}$ gives a nonscattering solution with energy $E_c$.

It remains to record compactness.  For any time sequence $s_n$, apply profile decomposition to $u_c(s_n)$.  If no modulation parameters $\lambda(s_n)$ and $z(s_n)$ make
\[
  (\lambda(s_n))^{1/2}u_c(s_n,\lambda(s_n)x+z(s_n)\ez)
\]
precompact in $\dot H^1$, a nontrivial profile splitting occurs.  All profiles of energy $<E_c$ scatter, and the same approximate-solution construction together with Proposition~\ref{prop:stability} would force $u_c$ to scatter, again a contradiction.  Consequently,
\[
  \left\{(\lambda(t))^{1/2}u_c(t,\lambda(t)x+z(t)\ez):t\in I\right\}
  \Subset \dot H^1(\R^3).
\]
On the soliton-like branch we normalize $\lambda(t)\equiv1$, obtaining
\[
  \{u_c(t,\cdot+z(t)\ez):t\in I\}\Subset\dot H^1(\R^3),
\]
which is equivalent to the tightness condition \eqref{eq:tightness}.  If a frequency-cascade branch appears in the reduction, it must be excluded by a separate argument in a full unconditional scattering theory; the rigidity arguments below address the soliton-like branch.
\end{proof}

In this reduction, Proposition~\ref{prop:LWP} gives maximal lifespan, the blow-up criterion, and the scattering criterion, while Proposition~\ref{prop:stability} transfers scattering of nonlinear profiles back to the original sequence.  The compact element obtained above is the object of the localized-virial rigidity argument.

\begin{remark}
Proposition~\ref{prop:compactness-reduction} is a standard concentration-compactness input in the Kenig-Merle scheme.  The contribution of the present paper is that, once the compact element satisfying \eqref{eq:tightness} is available, the two natural entrances above can be excluded by a direct localized virial argument.
\end{remark}

\section{Variational structure and shifted-center localized virial}\label{sec:virial-prelim}

This section gives the analytic basis for the paper: threshold coercivity, local conservation laws, the shifted-center localized virial identity, and endpoint estimates.  The functional $K$ is the Pohozaev functional defined in the introduction.  We write $\rho=|u|^2$ and $j=\Ima(\bar u\na u)$.  The truncation radius is $R\ge1$ and the fixed axis is $Z\ez$.

The sharp Sobolev inequality and the ground-state equalities imply that, under \eqref{eq:threshold}, there exists $\delta>0$ such that
\begin{equation}\label{eq:gradient-gap}
  \sup_{t\in I}\|\na u(t)\|_2\le (1-\delta)\|\na W\|_2.
\end{equation}
Thus
\begin{align*}
  \|u(t)\|_6^6
  &\le C_3^3\|\na u(t)\|_2^6
  =\left(\frac{\|\na u(t)\|_2^2}{\|\na W\|_2^2}\right)^2\|\na u(t)\|_2^2
  \le (1-\delta)^4\|\na u(t)\|_2^2.
\end{align*}
Consequently,
\begin{equation}\label{eq:K-coercive}
  K(u(t))=\int_{\R^3}(|\na u(t)|^2-|u(t)|^6)\dd x
  \ge c_0\|\na u(t)\|_2^2,
\end{equation}
where $c_0=1-(1-\delta)^4>0$.  If $u$ is a nonzero compact critical element, then
\[
  \inf_{t\in I}\|\na u(t)\|_2>0.
\]
Indeed, otherwise a sequence $t_n$ with $\|\na u(t_n)\|_2\to0$ would also satisfy $\|u(t_n)\|_6\to0$, hence $E(u)=E(u(t_n))\to0$.  But the threshold gap gives
\[
  E(u(t))\ge\left(\frac12-\frac16(1-\delta)^4\right)\|\na u(t)\|_2^2,
\]
and the coefficient is positive; $E(u)=0$ would force $u\equiv0$.  Thus, by \eqref{eq:K-coercive}, there exists
\begin{equation}\label{eq:c1-positive}
  c_1>0,
  \qquad K(u(t))\ge c_1\quad\text{for all }t\in I.
\end{equation}
This positive lower bound is the source of all virial contradictions below.

The local mass conservation law is
\begin{equation}\label{eq:mass-local}
  \pa_t\rho+2\operatorname{div}j=0.
\end{equation}
If $u\in H^1$, then the mass and the axial momentum
\[
  P_z(u)=\Ima\int_{\R^3}\bar u\pa_z u\dd x
\]
are conserved.  The local momentum conservation law is
\begin{equation}\label{eq:momentum-local}
  \pa_t j_k+\pa_\ell S_{k\ell}=0,
\end{equation}
where
\[
  S_{k\ell}=2\Rea(\pa_k\bar u\,\pa_\ell u)-\frac12\delta_{k\ell}\Delta\rho-
  \frac23\delta_{k\ell}\rho^3.
\]
For a real-valued $a\in C^4$, set
\[
  M_a(t)=2\Ima\int_{\R^3}\na a(x)\cdot\na u(t,x)\,\overline{u(t,x)}\dd x.
\]
Integrating by parts in \eqref{eq:momentum-local} gives
\begin{equation}\label{eq:local-virial-general}
  \frac{\dd}{\dd t}M_a(t)
  =4\int a_{k\ell}\Rea(\pa_k u\,\overline{\pa_\ell u})\dd x
  -\int (\Delta^2a)|u|^2\dd x
  -\frac43\int(\Delta a)|u|^6\dd x.
\end{equation}
For $a(x)=|x|^2/2$ this becomes
\[
  \frac{\dd}{\dd t}2\Ima\int x\cdot\na u\,\bar u\dd x=4K(u(t)).
\]
Since a compact element need not have finite variance, we use only truncated versions.

Fix $a\in C^4([0,\infty))$ such that
\[
  a(s)=\frac12s^2\quad(0\le s\le1),
  \qquad a'(s)=0\quad(s\ge2),
  \qquad 0\le a''(s)\le1,
\]
with all relevant derivatives bounded.  For $R\ge1$ and $Z\in\R$, define
\[
  a_{R,Z}(x)=R^2a\left(\frac{|x-Z\ez|}{R}\right),
\]
and
\[
  M_{R,Z}(t)=2\Ima\int_{\R^3}\na a_{R,Z}(x)\cdot\na u(t,x)\overline{u(t,x)}\dd x.
\]
We write $M_R=M_{R,0}$.  The cutoff satisfies
\begin{equation}\label{eq:cutoff-bounds}
  |\na a_{R,Z}|\le CR\one_{\{|x-Z\ez|\le2R\}},\quad
  |\na^2a_{R,Z}|\le C,
\end{equation}
and
\begin{equation}\label{eq:bi-lap-bound}
  |\Delta^2a_{R,Z}|\le CR^{-2}\one_{\{R\le |x-Z\ez|\le2R\}}.
\end{equation}
On the ball $|x-Z\ez|\le R$, one has $\na^2a_{R,Z}=I_3$, $\Delta a_{R,Z}=3$, and $\Delta^2a_{R,Z}=0$.  Inserting this into \eqref{eq:local-virial-general} gives
\begin{equation}\label{eq:MRZ-derivative}
  M'_{R,Z}(t)=4K(u(t))+\Err_{R,Z}(t).
\end{equation}
More explicitly,
\begin{align*}
  \Err_{R,Z}(t)
  &=4\int (a_{R,Z;k\ell}-\delta_{k\ell})
        \Rea(\pa_k u\,\overline{\pa_\ell u})\dd x \\
  &\quad -\frac43\int(\Delta a_{R,Z}-3)|u|^6\dd x
        -\int(\Delta^2a_{R,Z})|u|^2\dd x.
\end{align*}
Hence
\begin{equation}\label{eq:error-prehardy}
  |\Err_{R,Z}(t)|\le C\int_{|x-Z\ez|\ge R}
  (|\na u(t)|^2+|u(t)|^6)\dd x
  +CR^{-2}\int_{R\le |x-Z\ez|\le2R}|u(t)|^2\dd x.
\end{equation}

We first remove the local $L^2$ term by a translated Hardy inequality.

\begin{lemma}[Translated Hardy]\label{lem:shifted-hardy}
For any $a_0\in\R^3$ and $f\in\dot H^1(\R^3)$,
\begin{equation}\label{eq:shifted-hardy}
  \int_{\R^3}\frac{|f(x)|^2}{|x-a_0|^2}\dd x
  \le4\int_{\R^3}|\na f(x)|^2\dd x.
\end{equation}
Consequently,
\begin{equation}\label{eq:annulus-hardy}
  R^{-2}\int_{R\le |x-a_0|\le2R}|f(x)|^2\dd x
  \le C\int_{|x-a_0|\ge R/2}|\na f(x)|^2\dd x.
\end{equation}
\end{lemma}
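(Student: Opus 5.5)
The first inequality \eqref{eq:shifted-hardy} reduces to the standard Hardy inequality by translation. For $f\in C_c^\infty(\R^3)$ set $g(x)=f(x+a_0)$, so $\|\na g\|_2=\|\na f\|_2$ and
\[
  \int\frac{|f(x)|^2}{|x-a_0|^2}\dd x=\int\frac{|g(y)|^2}{|y|^2}\dd y .
\]
We are therefore reduced to proving $\int|g|^2|y|^{-2}\dd y\le4\|\na g\|_2^2$ in dimension three. For this we use $\operatorname{div}(y/|y|^2)=1/|y|^2$ on $\R^3\setminus\{0\}$; this field is locally integrable, so there is no boundary term at the origin. Integrating by parts gives
\[
  \int\frac{|g|^2}{|y|^2}\dd y=-2\Rea\int\bar g\,\frac{y\cdot\na g}{|y|^2}\dd y .
\]
By Cauchy--Schwarz the right-hand side is at most
\[
  2\Bigl(\int\frac{|g|^2}{|y|^2}\dd y\Bigr)^{1/2}\|\na g\|_2 .
\]
Dividing through yields the constant $4=(2/(d-2))^2$ with $d=3$. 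We then pass to general $f\in\dot H^1$ by density of $C_c^\infty$. Along an approximating sequence we may take a.e.\ convergence and apply Fatou's lemma on the left, while the right side converges.

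The annulus estimate \eqref{eq:annulus-hardy} cannot follow from \eqref{eq:shifted-hardy} directly, since the right side there must be the \emph{exterior} gradient only. We localize with a cutoff. Choose $\chi\in C^\infty$ with $\chi=0$ on $|x-a_0|\le R/2$, $\chi=1$ on $|x-a_0|\ge R$, $0\le\chi\le1$ and $|\na\chi|\le C/R$. Apply \eqref{eq:shifted-hardy} to $\chi f\in\dot H^1$. On the annulus $R\le|x-a_0|\le2R$ we have $|x-a_0|^{-2}\ge(4R^2)^{-1}$, so
\[
  R^{-2}\int_{R\le|x-a_0|\le2R}|f|^2\dd x
  \le 4\int\frac{|\chi f|^2}{|x-a_0|^2}\dd x
  \le 16\int|\na(\chi f)|^2\dd x .
\]
Next, $|\na(\chi f)|^2\le2|\na f|^2\one_{\{|x-a_0|\ge R/2\}}+2|\na\chi|^2|f|^2$. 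The second term is supported in the shell $R/2\le|x-a_0|\le R$ and bounded by
\[
  CR^{-2}\int_{R/2\le|x-a_0|\le R}|f|^2\dd x .
\]

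This leftover shell $L^2$ term is the main obstacle, because it is not directly controlled by the exterior gradient. To handle it, we would use a Hardy inequality valid on the exterior domain itself rather than a cutoff. The natural tool is the one-dimensional radial Hardy inequality on $[R/2,\infty)$ in the weighted measure $s^2\dd s$, applied on each ray. For $d=3$ the decay of $f\in\dot H^1$ at infinity, namely $f\in L^6$ so $f\to0$ along a.e.\ ray in the averaged sense, allows integrating from infinity. Writing $f(s\omega)=-\int_s^\infty\pa_\rho f(\rho\omega)\dd\rho$ and using a weighted Hardy inequality of the form
\[
  \int_{R/2}^\infty\frac{|h(s)|^2}{s^2}s^2\dd s\le4\int_{R/2}^\infty|h'(s)|^2s^2\dd s,
  \qquad h(\infty)=0,
\]
we obtain
\[
  \int_{|x-a_0|\ge R/2}\frac{|f|^2}{|x-a_0|^2}\dd x\le4\int_{|x-a_0|\ge R/2}|\na f|^2\dd x .
\]
The weighted inequality itself is proved by the same integration by parts as above. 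The boundary term at $s=R/2$ carries a favorable sign, $-(R/2)|h(R/2)|^2\le0$, and the boundary term at infinity vanishes thanks to the decay. The annulus bound then follows from $|x-a_0|^{-2}\ge(4R^2)^{-1}$ on $R\le|x-a_0|\le2R$. The justification of the vanishing boundary term at infinity, done first for compactly supported smooth $f$ and then by density, is the only delicate point.
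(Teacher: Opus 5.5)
Your proof is correct and follows the paper in substance. The first inequality is reduced to the standard Hardy inequality by translation, and the annulus estimate is ultimately closed by the exterior Hardy inequality $\int_{|x-a_0|\ge r}|f|^2|x-a_0|^{-2}\dd x\le 4\int_{|x-a_0|\ge r}|\na f|^2\dd x$. The paper simply invokes that inequality as standard after its cutoff step, whereas you prove it by radial integration by parts, where the boundary term at the inner radius has the favorable sign; as you observed, once it is available the cutoff argument and the paper's remark about absorbing the shell term on adjacent dyadic annuli are unnecessary.
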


\begin{proof}
The estimate \eqref{eq:shifted-hardy} is the usual three-dimensional Hardy inequality applied to $f(\cdot+a_0)$.  For the localized estimate, choose $\chi\in C^\infty([0,\infty))$ with $\chi=0$ on $[0,1/2]$ and $\chi=1$ on $[1,\infty)$ and set
\[
  g(x)=\chi\left(\frac{|x-a_0|}{R}\right)f(x).
\]
Since $g=f$ on $|x-a_0|\ge R$,
\begin{align*}
  R^{-2}\int_{R\le |x-a_0|\le2R}|f|^2\dd x
  &\le C\int_{|x-a_0|\ge R}\frac{|g(x)|^2}{|x-a_0|^2}\dd x
  \le C\int_{\R^3}|\na g(x)|^2\dd x.
\end{align*}
Moreover,
\[
  |\na g|^2\le C\one_{\{|x-a_0|\ge R/2\}}|\na f|^2
  +CR^{-2}\one_{\{R/2\le |x-a_0|\le R\}}|f|^2.
\]
The last term is absorbed by applying the same estimate on adjacent dyadic annuli, equivalently by the standard exterior Hardy inequality
\[
  \int_{|x-a_0|\ge R}\frac{|f|^2}{|x-a_0|^2}\dd x
  \le C\int_{|x-a_0|\ge R/2}|\na f|^2\dd x.
\]
This proves \eqref{eq:annulus-hardy}.
\end{proof}

By Lemma~\ref{lem:shifted-hardy}, \eqref{eq:error-prehardy} becomes
\begin{equation}\label{eq:error-hardied}
  |\Err_{R,Z}(t)|\le C\int_{|x-Z\ez|\ge R/2}
  (|\na u(t)|^2+|u(t)|^6)\dd x.
\end{equation}

\begin{figure}[H]
\centering
\includegraphics[width=0.62\textwidth]{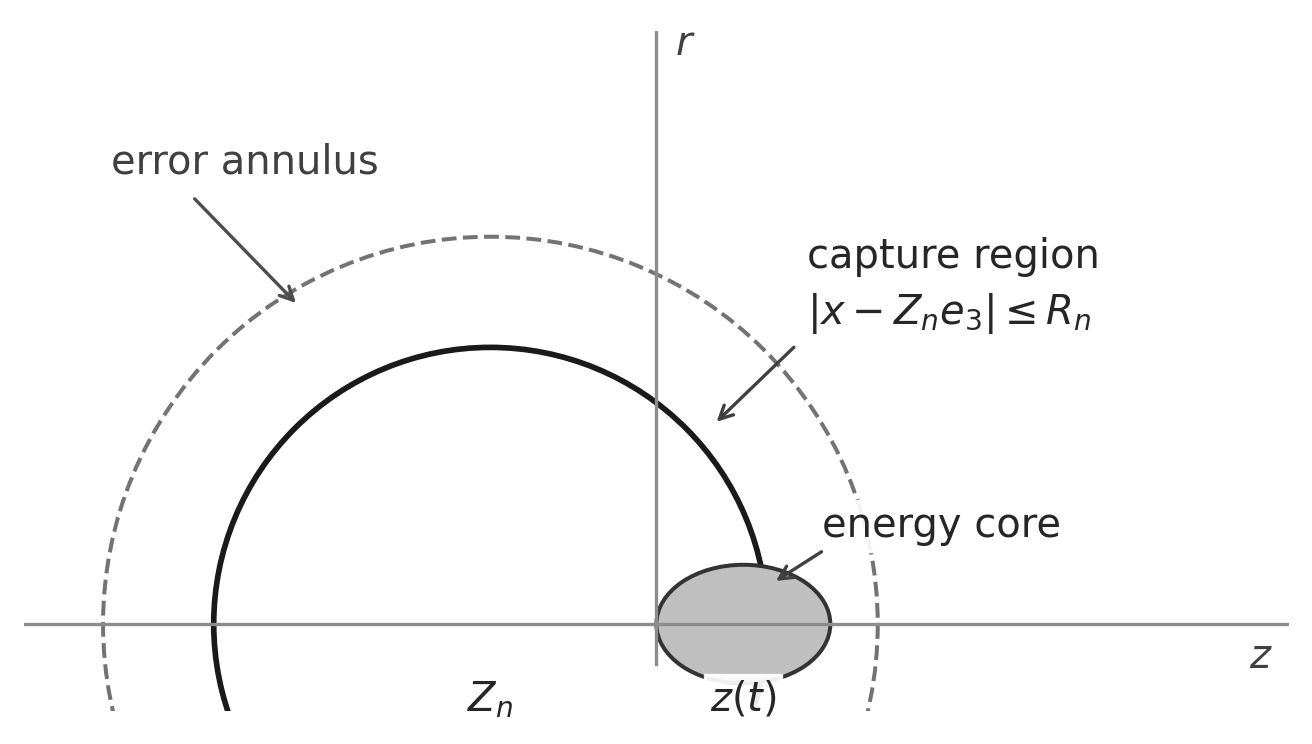}
\caption{Geometry of the shifted-center localized virial.  The truncation is centered at the fixed axis $Z_n e_3$; the main energy core lies inside the capture region, while the error comes from the exterior annulus.  Fixing $Z_n$ on a long time interval avoids derivative terms generated by a time-dependent center.}
\label{fig:shifted-virial}
\end{figure}

Figure~\ref{fig:shifted-virial} shows the covering relation behind the shifted truncation.  When the fixed-axis window covers the main energy core, the previous error estimate yields a positive derivative.

\begin{lemma}[Good-time positivity]\label{lem:good-time-positive}
Let $u$ be a nonzero compact critical element.  For $\eta>0$ sufficiently small, if
\[
  |z(t)-Z|\le A,
  \qquad R\ge 4(R_\eta+A+1),
\]
then
\[
  M'_{R,Z}(t)\ge3c_1.
\]
\end{lemma}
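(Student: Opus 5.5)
We start from the shifted-center identity \eqref{eq:MRZ-derivative}, which gives $M'_{R,Z}(t)=4K(u(t))+\Err_{R,Z}(t)$. The bound \eqref{eq:c1-positive} supplies $4K(u(t))\ge4c_1$. It therefore suffices to show $|\Err_{R,Z}(t)|\le c_1$ whenever $|z(t)-Z|\le A$ and $R\ge4(R_\eta+A+1)$, provided $\eta$ is small.

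The error bound \eqref{eq:error-hardied}, obtained after the translated Hardy inequality of Lemma~\ref{lem:shifted-hardy}, controls $|\Err_{R,Z}(t)|$ by $C\int_{|x-Z\ez|\ge R/2}(|\na u|^2+|u|^6)\dd x$. Here $C$ depends only on the fixed profile $a$ through \eqref{eq:cutoff-bounds} and \eqref{eq:bi-lap-bound}, so it is independent of $R$, $Z$ and $t$.

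Next we pass from a region centered at $Z\ez$ to one centered at the moving center $z(t)\ez$. If $|x-Z\ez|\ge R/2$, the triangle inequality gives
\[
  |x-z(t)\ez|\ge R/2-|z(t)-Z|\ge R/2-A\ge 2(R_\eta+A+1)-A\ge R_\eta .
\]
Hence the exterior region $\{|x-Z\ez|\ge R/2\}$ is contained in $\{|x-z(t)\ez|\ge R_\eta\}$. The tightness condition \eqref{eq:tightness} then yields $|\Err_{R,Z}(t)|\le C\eta$.

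To finish, we choose $\eta\le c_1/C$. This gives $M'_{R,Z}(t)\ge4c_1-c_1=3c_1$. The constant $C$ is the one in \eqref{eq:error-hardied}, and $c_1$ comes from \eqref{eq:c1-positive}; neither depends on $\eta$, $A$, $R$ or $Z$, so "sufficiently small" means $\eta\le c_1/C$.

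The only delicate point is making sure the constant in \eqref{eq:error-hardied} is uniform. The Hardy step absorbs the $R^{-2}$-weighted $L^2$ annulus term into a gradient tail on $|x-Z\ez|\ge R/2$, and this is exactly why the factor $4$ in the radius condition is needed. The containment inequality above uses that factor with room to spare. Everything else is a direct combination of results already established.
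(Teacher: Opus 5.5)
Your proof is correct and follows the paper's argument exactly: the triangle inequality places $\{|x-Z\ez|\ge R/2\}$ inside $\{|x-z(t)\ez|\ge R_\eta\}$, tightness together with \eqref{eq:error-hardied} gives $|\Err_{R,Z}(t)|\le C\eta$, and choosing $C\eta\le c_1$ with $4K(u(t))\ge 4c_1$ yields $M'_{R,Z}(t)\ge 3c_1$. One small correction to your closing remark: after the Hardy step the containment only needs $R\ge 2(R_\eta+A)$, so the factor $4$ is a convenient margin rather than ``exactly'' what is required.
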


\begin{proof}
If $|x-Z\ez|\ge R/2$, then
\[
  |x-z(t)\ez|\ge |x-Z\ez|-|Z-z(t)|\ge R/2-A\ge R_\eta.
\]
Thus \eqref{eq:tightness} and \eqref{eq:error-hardied} give $|\Err_{R,Z}(t)|\le C\eta$.  Choose $\eta$ such that $C\eta\le c_1$ and use \eqref{eq:MRZ-derivative} together with \eqref{eq:c1-positive}.
\end{proof}

On bad times the truncation need not cover the energy core.  We still need a uniform lower bound.

\begin{lemma}[Rough lower bound]\label{lem:rough-lower}
There exists $C_E<\infty$ such that for all $R\ge1$, $Z\in\R$, and $t\in I$,
\begin{equation}\label{eq:rough-derivative-bound}
  |M'_{R,Z}(t)|\le C_E,
  \qquad M'_{R,Z}(t)\ge -C_E.
\end{equation}
\end{lemma}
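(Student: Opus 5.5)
We derive the bound directly from the identity \eqref{eq:MRZ-derivative} and the explicit form of the virial derivative \eqref{eq:local-virial-general}. The claimed lower bound $M'_{R,Z}\ge -C_E$ is a consequence of the absolute bound $|M'_{R,Z}(t)|\le C_E$, so it suffices to prove the latter. The constant must be uniform in $R\ge1$, $Z\in\R$ and $t\in I$. Uniformity in $Z$ is automatic because every ingredient is translation invariant.

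\textbf{Step 1: bound each term of \eqref{eq:local-virial-general}.} Apply the formula with $a=a_{R,Z}$ and treat the three terms separately.
\begin{itemize}
\item By \eqref{eq:cutoff-bounds}, $|\na^2a_{R,Z}|\le C$ uniformly in $R$ and $Z$. Hence the Hessian term is at most $4C\|\na u(t)\|_2^2$.
\item Since $|\Delta a_{R,Z}|\le 3|\na^2a_{R,Z}|\le C$, the nonlinear term is at most $C\|u(t)\|_6^6$. By sharp Sobolev and \eqref{eq:gradient-gap}, $\|u(t)\|_6^6\le (1-\delta)^4\|\na u(t)\|_2^2$.
\item For the bi-Laplacian term, use \eqref{eq:bi-lap-bound} to get
\[
  \Bigl|\int(\Delta^2a_{R,Z})|u|^2\dd x\Bigr|\le CR^{-2}\int_{R\le|x-Z\ez|\le2R}|u|^2\dd x.
\]
Then Lemma~\ref{lem:shifted-hardy} with $a_0=Z\ez$ bounds this by $C\|\na u(t)\|_2^2$.
\end{itemize}
The key point in the third item is that $u$ need not have finite mass, so the translated Hardy inequality is the only way to control the $L^2$ term. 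The factor $R^{-2}$ matches $|x-Z\ez|^{-2}$ on the annulus exactly, which is what keeps the bound independent of $R$.

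\textbf{Step 2: combine and conclude.} Adding the three bounds gives
\[
  |M'_{R,Z}(t)|\le C\|\na u(t)\|_2^2\le C\|\na W\|_2^2=:C_E.
\]
This constant depends only on the threshold bounds, as the statement requires.

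\textbf{Possible obstacle.} The only delicate point is justifying \eqref{eq:local-virial-general} for $\dot H^1$ solutions, which need not be $L^2$. This is resolved by noting that $M_{R,Z}(t)$ is well defined: since $|\na a_{R,Z}|\le CR$ on a ball of radius $2R$, it is bounded by $CR\|\na u\|_2\|u\|_{L^2(B_{2R})}\lesssim R^2\|\na u\|_2^2$ via Hölder and Sobolev. One then establishes the identity for smooth approximating solutions and passes to the limit using the local theory of Proposition~\ref{prop:LWP}.

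Alternatively, one can avoid the full formula: combining \eqref{eq:MRZ-derivative} and \eqref{eq:error-hardied} gives $|M'_{R,Z}|\le 4|K(u)|+C(\|\na u\|_2^2+\|u\|_6^6)$, which leads to the same $C_E$.
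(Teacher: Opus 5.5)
Your proposal is correct and follows essentially the same route as the paper. You bound the three terms of \eqref{eq:local-virial-general} using the uniform Hessian bound, Sobolev embedding with the threshold gradient bound, and the translated Hardy inequality on the annulus, which gives a constant depending only on $\|\na W\|_2$; the only cosmetic difference is that you control $\|u(t)\|_6^6$ by $(1-\delta)^4\|\na u(t)\|_2^2$ rather than by $C\|\na u(t)\|_2^6$.
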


\begin{proof}
Using \eqref{eq:local-virial-general}, \eqref{eq:cutoff-bounds}, \eqref{eq:bi-lap-bound}, Hardy's inequality, Sobolev embedding, and the threshold gradient bound,
\begin{align*}
  |M'_{R,Z}(t)|
  &\le C\int_{|x-Z\ez|\le2R}|\na u(t)|^2\dd x
     +C\int_{|x-Z\ez|\le2R}|u(t)|^6\dd x \\
  &\quad +CR^{-2}\int_{R\le |x-Z\ez|\le2R}|u(t)|^2\dd x \\
  &\le C\|\na u(t)\|_2^2+C\|\na u(t)\|_2^6
  \le C_E.
\end{align*}
\end{proof}

The rough endpoint estimate uses only $\dot H^1\hookrightarrow L^6$.  From \eqref{eq:cutoff-bounds}, Cauchy-Schwarz, H\"older, and Sobolev,
\begin{align*}
  |M_{R,Z}(t)|
  &\le C R\|\na u(t)\|_2
  \left(\int_{|x-Z\ez|\le2R}|u(t,x)|^2\dd x\right)^{1/2} \\
  &\le C R\|\na u(t)\|_2 |B(0,2R)|^{1/3}\|u(t)\|_6
  \le C_uR^2.
\end{align*}
Thus
\begin{equation}\label{eq:coarse-endpoint}
  |M_{R,Z}(t)|\le C_uR^2.
\end{equation}
If finite mass is available, then
\begin{equation}\label{eq:mass-endpoint}
  |M_{R,Z}(t)|\le C R\|u(t)\|_2\|\na u(t)\|_2\le C_uR.
\end{equation}

Define the compact density modulus
\[
  \omega_u(R)=R^{-2}\sup_{t\in I}\int_{|x-z(t)\ez|\le R}|u(t,x)|^2\dd x,
  \qquad R\ge1.
\]
This modulus improves the pure $\dot H^1$ endpoint estimate and decays automatically under compactness.

\begin{lemma}[Density modulus decay]\label{lem:omega-decay}
If $u$ satisfies \eqref{eq:tightness}, then
\[
  \omega_u(R)\to0,
  \qquad R\to\infty.
\]
\end{lemma}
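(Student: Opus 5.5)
We prove $\omega_u(R)\to0$ by splitting the ball $\{|x-z(t)\ez|\le R\}$ into an inner ball of fixed radius $R_\eta$ and an outer shell. The inner ball is handled by H\"older and the uniform $L^6$ bound. The outer shell is handled by H\"older with the small tail mass from \eqref{eq:tightness}. The point is that the $L^2$ mass of a region of volume $V$ is controlled by $V^{2/3}\|u\|_{L^6}^2$, and the $R^{-2}$ normalization exactly matches the volume factor $R^{3\cdot 2/3}=R^2$ for the full ball. So only the $L^6$ norm of the region matters, and this is small outside the compact core.

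Fix $\eta>0$, let $R_\eta$ be as in \eqref{eq:tightness}, and take $R\ge R_\eta$. For each $t\in I$, write
\[
  \int_{|x-z(t)\ez|\le R}|u|^2\dd x
  =\int_{|x-z(t)\ez|\le R_\eta}|u|^2\dd x+\int_{R_\eta\le|x-z(t)\ez|\le R}|u|^2\dd x .
\]

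For the first term, H\"older on a ball of volume $CR_\eta^3$ gives the bound $CR_\eta^2\|u(t)\|_6^2$. Sobolev and the gradient gap \eqref{eq:gradient-gap} give $\|u(t)\|_6\le C\|\na W\|_2$, so this term is at most $C_uR_\eta^2$, uniformly in $t$.

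For the second term, H\"older on the annulus gives the bound
\[
  CR^2\Bigl(\int_{|x-z(t)\ez|\ge R_\eta}|u|^6\dd x\Bigr)^{1/3}\le CR^2\eta^{1/3},
\]
using \eqref{eq:tightness}.

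Taking the supremum in $t$ and dividing by $R^2$ yields
\[
  \omega_u(R)\le C_uR_\eta^2R^{-2}+C\eta^{1/3}.
\]
Letting $R\to\infty$ gives $\limsup_{R\to\infty}\omega_u(R)\le C\eta^{1/3}$. Since $\eta$ is arbitrary, the claim follows.

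No step is a real obstacle here. The one point to check is that H\"older with exponents $(3,3/2)$ produces exactly the factor $R^2$, which cancels the normalization, so that all of the smallness comes from the $L^6$ tail. Only the $L^6$ part of \eqref{eq:tightness} is used; no Hardy inequality is needed.
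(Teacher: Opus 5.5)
Your proposal is correct and follows the paper's proof essentially step for step. Both split the ball at the tightness radius. Both use H\"older plus the uniform Sobolev bound on the inner ball and H\"older with the small $L^6$ tail on the annulus, which gives $\omega_u(R)\le C_uR_\eta^2R^{-2}+C\eta^{1/3}$, and then send $R\to\infty$ and $\eta\downarrow0$.
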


\begin{proof}
Fix $\eps>0$.  By \eqref{eq:tightness}, choose $L=L_\eps$ such that
\[
  \sup_t\int_{|x-z(t)\ez|\ge L}|u(t,x)|^6\dd x\le\eps.
\]
For $R\ge2L$, split the ball $\{|x-z(t)\ez|\le R\}$ into the inner ball of radius $L$ and the annulus $L<|x-z(t)\ez|\le R$.  The inner part is bounded by $C_uL^2$ by H\"older and Sobolev.  The annular part satisfies
\[
  \int_{L<|x-z(t)\ez|\le R}|u(t)|^2\dd x
  \le |B(0,R)|^{2/3}
  \left(\int_{|x-z(t)\ez|\ge L}|u(t)|^6\dd x\right)^{1/3}
  \le CR^2\eps^{1/3}.
\]
Hence $\omega_u(R)\le C_uL^2/R^2+C\eps^{1/3}$.  Let $R\to\infty$ and then $\eps\downarrow0$.
\end{proof}

Lemma~\ref{lem:omega-decay} says that $\dot H^1$ compactness automatically rules out exact quadratic growth of the local mass.  At endpoints lying inside the fixed-axis window, this improves the virial endpoint estimate.

\begin{lemma}[Density endpoint]\label{lem:omega-endpoint}
If $|z(t)-Z|\le A$ and $A\le R/4$, then
\[
  |M_{R,Z}(t)|\le C_uR^2\omega_u(3R)^{1/2}.
\]
\end{lemma}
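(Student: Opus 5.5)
The plan is to start from the pointwise bound on the shifted virial functional coming from \eqref{eq:cutoff-bounds}, namely
\[
  |M_{R,Z}(t)|\le CR\,\|\na u(t)\|_2\left(\int_{|x-Z\ez|\le2R}|u(t,x)|^2\dd x\right)^{1/2},
\]
which is the first line of the argument leading to \eqref{eq:coarse-endpoint}. Instead of passing from the local $L^2$ mass to $L^6$ via Hölder on the whole ball, as in \eqref{eq:coarse-endpoint}, we keep the local mass and compare it with the compact density modulus $\omega_u$.

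The key geometric step is a ball inclusion. Since $|z(t)-Z|\le A\le R/4$, any $x$ with $|x-Z\ez|\le 2R$ satisfies
\[
  |x-z(t)\ez|\le |x-Z\ez|+|Z-z(t)|\le 2R+R/4\le 3R.
\]
Hence $\{|x-Z\ez|\le2R\}\subset\{|x-z(t)\ez|\le3R\}$. By the definition of $\omega_u$ (a supremum over $t$, applied here at the specific time $t$), this gives
\[
  \int_{|x-Z\ez|\le2R}|u(t)|^2\dd x\le\int_{|x-z(t)\ez|\le3R}|u(t)|^2\dd x\le (3R)^2\omega_u(3R).
\]
Note $3R\ge1$, so $\omega_u(3R)$ is defined.

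Substituting this back, and using the uniform gradient bound \eqref{eq:gradient-gap} to bound $\|\na u(t)\|_2\le\|\na W\|_2$, we obtain
\[
  |M_{R,Z}(t)|\le CR\cdot\|\na W\|_2\cdot 3R\,\omega_u(3R)^{1/2}=C_uR^2\omega_u(3R)^{1/2}.
\]
The argument is essentially routine. The only point requiring care is matching the radii: the support radius $2R$ of $\na a_{R,Z}$, enlarged by the window $A\le R/4$, must fit inside the radius $3R$ appearing in $\omega_u$. The chosen constants leave room for this, so no step is a genuine obstacle.
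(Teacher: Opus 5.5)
Your proof is correct and matches the paper's argument: Cauchy--Schwarz with the support bound \eqref{eq:cutoff-bounds}, then the inclusion $\{|x-Z\ez|\le2R\}\subset\{|x-z(t)\ez|\le3R\}$ (which follows from $A\le R/4$), then the definition of $\omega_u(3R)$ together with the uniform gradient bound. Your explicit check that $3R\ge1$, so that $\omega_u(3R)$ is defined, is a small detail the paper leaves implicit.
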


\begin{proof}
The condition implies
\[
  \{|x-Z\ez|\le2R\}\subset\{|x-z(t)\ez|\le3R\}.
\]
Thus
\begin{align*}
  |M_{R,Z}(t)|
  &\le C R\|\na u(t)\|_2
  \left(\int_{|x-Z\ez|\le2R}|u(t,x)|^2\dd x\right)^{1/2} \\
  &\le C_u R
  \left(\int_{|x-z(t)\ez|\le3R}|u(t,x)|^2\dd x\right)^{1/2}
  \le C_uR^2\omega_u(3R)^{1/2}.
\end{align*}
\end{proof}

\begin{remark}[Morrey endpoint]\label{rem:morrey}
If for some $0\le\sigma<1$ and $C_\sigma$,
\begin{equation}\label{eq:morrey-mass}
  \int_{|x-z(t)\ez|\le R}|u(t,x)|^2\dd x\le C_\sigma R^{2\sigma},
  \qquad R\ge1,
\end{equation}
then the same endpoint computation gives $|M_{R,Z}(t)|\lesssim R^{1+\sigma}$.  This interpolates between the finite-mass endpoint $\sigma=0$ and the pure $\dot H^1$ endpoint $\sigma=1$.
\end{remark}

\section{Best fixed-axis window rigidity}\label{sec:window}

This section proves Theorem~\ref{thm:window-rigidity} and records several equivalent or stronger forms.  The sets $G_n$ and $B_n$ will denote good times and bad times, respectively; $A_n$ is the window radius and $Z_n\ez$ the fixed axis chosen on the interval.  The idea is to choose a constant axis on each long interval, use positivity on the good times, and use only the rough lower bound on the bad times.

\begin{figure}[H]
\centering
\includegraphics[width=0.72\textwidth]{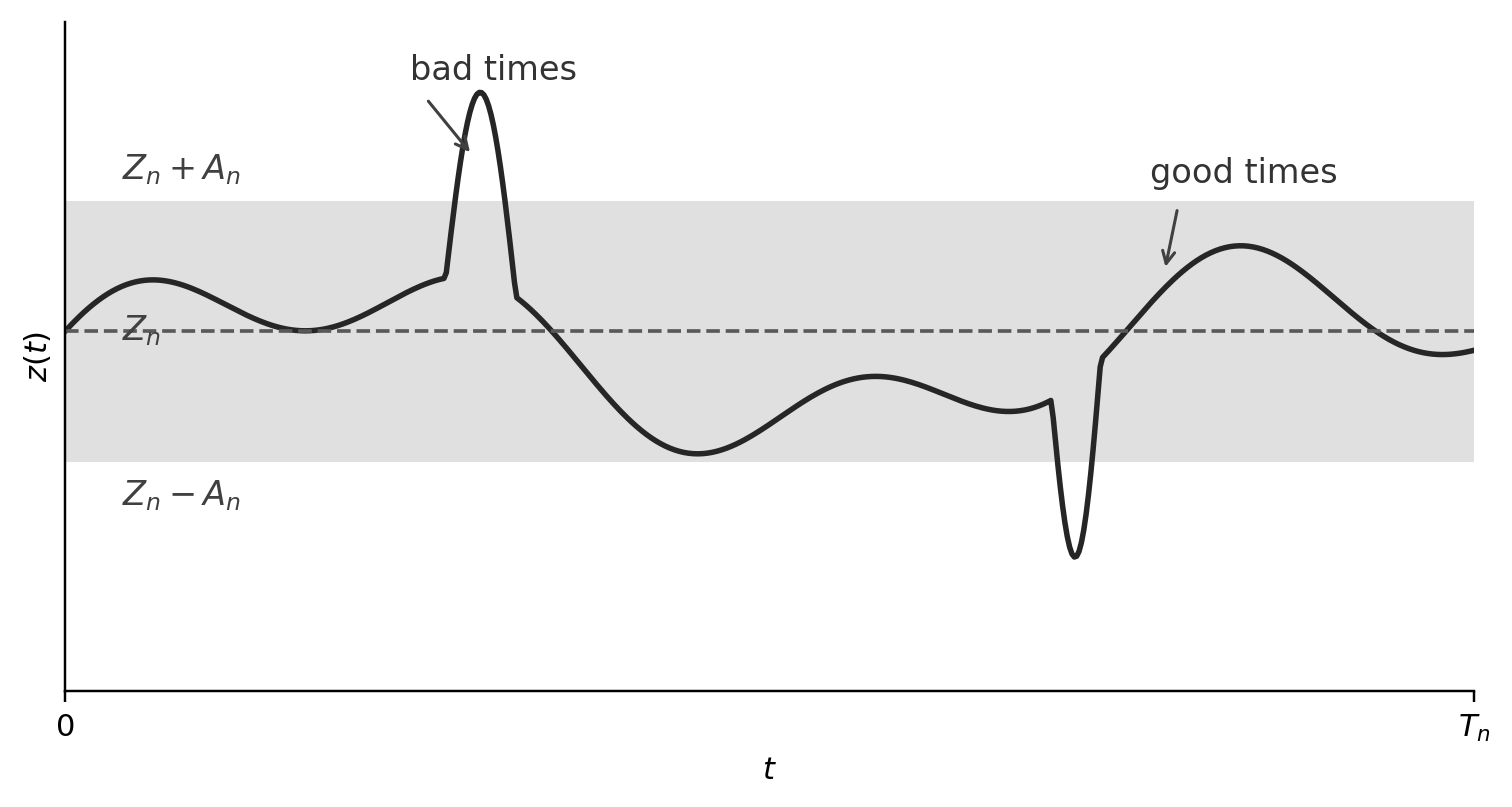}
\caption{Time distribution associated with the best fixed-axis window.  The shaded band represents the fixed window $|z(t)-Z_n|\le A_n$; most times lie inside it, while only a small set of bad times leaves the window.  Theorem~\ref{thm:window-rigidity} converts this time-occupancy property into positive averaged virial growth.}
\label{fig:window-band}
\end{figure}

Figure~\ref{fig:window-band} depicts the good-time window.  We first prove a version with the density modulus; the main theorem follows from the rough endpoint estimate.

\begin{theorem}[Window modulus]\label{thm:window-omega}
Let $u$ be a nonzero soliton-like cylindrical compact critical element.  Let $c_1$, $C_E$, and $\delta_{\rm opt}$ be as in \eqref{eq:c1-positive}, \eqref{eq:rough-derivative-bound}, and \eqref{eq:delta-opt-intro}.  Assume that there exist $\delta_0\in(0,\delta_{\rm opt})$, $T_n\to\infty$, $A_n\ge0$, $Z_n\in\R$, and $\eta_n\downarrow0$ such that
\begin{equation}\label{eq:bad-time-window}
  \left|\{t\in[0,T_n]: |z(t)-Z_n|>A_n\}\right|\le\delta_0T_n,
\end{equation}
and let
\begin{equation}\label{eq:Rn-window}
  R_n=4(R_{\eta_n}+A_n+1)+T_n^{1/4}.
\end{equation}
If
\begin{equation}\label{eq:omega-compatible}
  \frac{R_n^2\omega_u(3R_n)^{1/2}}{T_n}\to0,
\end{equation}
then no such $u$ exists.
\end{theorem}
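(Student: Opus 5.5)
We argue by contradiction and integrate the shifted-center virial identity \eqref{eq:MRZ-derivative} for $M_{R_n,Z_n}$ over a long subinterval of $[0,T_n]$ whose two endpoints are good times. The derivative is split into good and bad times, as in Lemmas~\ref{lem:good-time-positive} and \ref{lem:rough-lower}. Set $G_n=\{t\in[0,T_n]:|z(t)-Z_n|\le A_n\}$ and $B_n=[0,T_n]\setminus G_n$, so that $|B_n|\le\delta_0T_n$ by \eqref{eq:bad-time-window}. Since $\eta_n\downarrow0$, for $n$ large $\eta_n$ is below the smallness threshold of Lemma~\ref{lem:good-time-positive}. Since $R_n\ge4(R_{\eta_n}+A_n+1)$, that lemma gives $M'_{R_n,Z_n}(t)\ge3c_1$ on $G_n$. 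On $B_n$ we only use $M'_{R_n,Z_n}\ge-C_E$.

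Next we choose the endpoints. Because $\delta_0<1/4$, the good set $G_n$ has measure at least $\tfrac34T_n$. Hence $G_n\cap[0,\delta_0' T_n]$ and $G_n\cap[(1-\delta_0')T_n,T_n]$ are nonempty for a suitable fraction; concretely, take $t_1\in G_n\cap[0,2\delta_0T_n]$ and $t_2\in G_n\cap[(1-2\delta_0)T_n,T_n]$, which exist because each of these intervals has length $2\delta_0T_n>|B_n|$. Then $t_2-t_1\ge(1-4\delta_0)T_n$, and the bad times inside $[t_1,t_2]$ have measure at most $\delta_0T_n$. Integrating gives
\[
M_{R_n,Z_n}(t_2)-M_{R_n,Z_n}(t_1)\ge 3c_1\bigl((1-4\delta_0)T_n-\delta_0T_n\bigr)-C_E\delta_0T_n
=\bigl(3c_1-\delta_0(15c_1+C_E)\bigr)T_n .
\]
Positivity of this coefficient is the step where the definition \eqref{eq:delta-opt-intro} of $\delta_{\rm opt}$ must enter. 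With my crude endpoint placement the constant comes out as $15c_1$ rather than $9c_1$. I therefore expect the main bookkeeping obstacle to be choosing $t_1,t_2$ more carefully. I would first try taking $t_1=\inf G_n$ and $t_2=\sup G_n$ (up to $\epsilon$), so that the lost length $[0,t_1]\cup[t_2,T_n]$ consists entirely of bad time. Then
\[
t_2-t_1\ge T_n-|B_n\setminus[t_1,t_2]|,\qquad |G_n\cap[t_1,t_2]|\ge(1-\delta_0)T_n ,
\]
which gives the growth $\bigl(3c_1(1-\delta_0)-C_E\delta_0\bigr)T_n$, positive for $\delta_0<3c_1/(3c_1+C_E)$. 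This is even better than the stated threshold, so any such choice will do, and I would only need to match the paper's constant $\delta_{\rm opt}$, which is certainly sufficient. Either way, for $\delta_0<\delta_{\rm opt}$ we obtain a lower bound $cT_n$ with $c>0$ independent of $n$.

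For the upper bound, both endpoints are good times, so $|z(t_i)-Z_n|\le A_n\le R_n/4$. Lemma~\ref{lem:omega-endpoint} then gives $|M_{R_n,Z_n}(t_i)|\le C_uR_n^2\omega_u(3R_n)^{1/2}$. Combining the two bounds yields
\[
cT_n\le 2C_uR_n^2\omega_u(3R_n)^{1/2},
\]
which contradicts \eqref{eq:omega-compatible} as $n\to\infty$. The term $T_n^{1/4}$ in \eqref{eq:Rn-window} only guarantees $R_n\to\infty$, so that $\omega_u(3R_n)\to0$ by Lemma~\ref{lem:omega-decay}. It is not needed in the contradiction itself. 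Finally, we must check that the fixed center $Z_n$ produces no extra time-derivative terms and that \eqref{eq:MRZ-derivative} holds for a.e. $t$. This follows from the fact that $a_{R,Z}$ does not depend on $t$, together with the fundamental theorem of calculus applied to the absolutely continuous function $M_{R_n,Z_n}$.
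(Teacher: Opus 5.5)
Your argument is correct, and its skeleton is the paper's: a good/bad time split, Lemma~\ref{lem:good-time-positive} on $G_n$, the rough bound on $B_n$, and Lemma~\ref{lem:omega-endpoint} at two good endpoints. The only difference is where you place the endpoints, and this changes the constant.

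The paper takes $t_{1,n}\in G_n\cap[0,\alpha T_n]$ and $t_{2,n}\in G_n\cap[(1-\alpha)T_n,T_n]$ with $\alpha\downarrow\delta_0$. It then writes the growth as $3c_1(t_2-t_1)-(3c_1+C_E)|[t_1,t_2]\cap B_n|$. Discarding up to $2\alpha T_n$ of possibly good time costs $6c_1\delta_0$, and this is exactly the origin of the $9c_1+C_E$ in $\delta_{\rm opt}$.

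Your first placement uses windows of length $2\delta_0T_n$ and so discards $4\delta_0T_n$, which gives $15c_1$. That coefficient is nonpositive for $\delta_0\in[3c_1/(15c_1+C_E),\delta_{\rm opt})$, a nonempty range. So your ``either way'' is not right: that placement alone does not prove the theorem as stated.

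Your second choice does work. Take $t_1,t_2\in G_n$ within $\epsilon T_n$ of $\inf G_n$ and $\sup G_n$. The $\epsilon$-slack is genuinely needed, because $z(t)$ need not be continuous, so $G_n$ need not be closed. The correct bound is then $|G_n\cap[t_1,t_2]|\ge(1-\delta_0-2\epsilon)T_n$, not $(1-\delta_0)T_n$. This gives growth at least $\bigl(3c_1-(3c_1+C_E)\delta_0-6c_1\epsilon\bigr)T_n$.

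That closes the contradiction for every $\delta_0<3c_1/(3c_1+C_E)$. This range strictly contains $(0,\delta_{\rm opt})$ and does not use the cap $1/4$. So your route proves a slightly stronger window theorem. The paper's symmetric-window choice is simply the one that reproduces its stated constant.
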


\begin{proof}
Choose $\alpha\in(\delta_0,1/2)$ such that
\[
  \gamma:=3c_1(1-2\alpha)-(3c_1+C_E)\delta_0>0.
\]
This is possible because as $\alpha\downarrow\delta_0$ the right-hand side tends to $3c_1-(9c_1+C_E)\delta_0>0$.  Define
\[
  G_n=\{t\in[0,T_n]: |z(t)-Z_n|\le A_n\},
  \qquad B_n=[0,T_n]\setminus G_n.
\]
Since $|B_n|\le\delta_0T_n<\alpha T_n$, choose
\[
  t_{1,n}\in[0,\alpha T_n]\cap G_n,
  \qquad t_{2,n}\in[(1-\alpha)T_n,T_n]\cap G_n.
\]
Then $t_{2,n}-t_{1,n}\ge(1-2\alpha)T_n$.  For $t\in G_n$, \eqref{eq:Rn-window} gives
\[
  R_n\ge4(R_{\eta_n}+|z(t)-Z_n|+1),
\]
so Lemma~\ref{lem:good-time-positive} yields
\[
  M'_{R_n,Z_n}(t)\ge3c_1.
\]
For $t\in B_n$, Lemma~\ref{lem:rough-lower} gives $M'_{R_n,Z_n}(t)\ge-C_E$.  Hence
\begin{align*}
  M_{R_n,Z_n}(t_{2,n})-M_{R_n,Z_n}(t_{1,n})
  &\ge 3c_1|[t_{1,n},t_{2,n}]\cap G_n|
       -C_E|[t_{1,n},t_{2,n}]\cap B_n| \\
  &=3c_1(t_{2,n}-t_{1,n})-(3c_1+C_E)|[t_{1,n},t_{2,n}]\cap B_n| \\
  &\ge \gamma T_n.
\end{align*}
At the endpoints $t_{i,n}\in G_n$, so $|z(t_{i,n})-Z_n|\le A_n\le R_n/4$.  Lemma~\ref{lem:omega-endpoint} and \eqref{eq:omega-compatible} imply
\[
  |M_{R_n,Z_n}(t_{2,n})|+|M_{R_n,Z_n}(t_{1,n})|
  \le C_uR_n^2\omega_u(3R_n)^{1/2}=o(T_n),
\]
contradicting the lower bound $\gamma T_n$.
\end{proof}

A simpler version follows by using the rough endpoint estimate only.

\begin{proposition}[Rough endpoint]\label{prop:coarse-window}
In Theorem~\ref{thm:window-omega}, condition \eqref{eq:omega-compatible} may be replaced by
\begin{equation}\label{eq:coarse-compatible}
  \frac{R_n^2}{T_n}\to0.
\end{equation}
Then the same nonexistence conclusion holds.
\end{proposition}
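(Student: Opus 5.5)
The plan is to rerun the proof of Theorem~\ref{thm:window-omega} verbatim, changing only the endpoint step. Alternatively, we deduce \eqref{eq:omega-compatible} directly from \eqref{eq:coarse-compatible}. The second route is immediate. By its definition, the density modulus satisfies
\[
  \omega_u(3R)\le (3R)^{-2}\sup_t\int_{|x-z(t)\ez|\le 3R}|u(t,x)|^2\dd x\le C_u,
\]
using H\"older and Sobolev exactly as in the rough endpoint computation leading to \eqref{eq:coarse-endpoint}. Indeed, the local mass on a ball of radius $3R$ is at most $|B(0,3R)|^{2/3}\|u(t)\|_6^2\le CR^2\|\na u(t)\|_2^2$, and this is bounded uniformly in $t$ by the threshold bound \eqref{eq:gradient-gap}. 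Lemma~\ref{lem:omega-decay} even gives $\omega_u(3R)\to0$, but boundedness suffices here. Hence
\[
  \frac{R_n^2\,\omega_u(3R_n)^{1/2}}{T_n}\le C_u\frac{R_n^2}{T_n}\to0,
\]
so \eqref{eq:omega-compatible} holds and Theorem~\ref{thm:window-omega} applies.

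For a self-contained check along the first route, we keep the same $\alpha$, $\gamma$, good and bad sets $G_n,B_n$, and endpoints $t_{1,n},t_{2,n}\in G_n$. Lemmas~\ref{lem:good-time-positive} and \ref{lem:rough-lower} give the lower bound $M_{R_n,Z_n}(t_{2,n})-M_{R_n,Z_n}(t_{1,n})\ge\gamma T_n$. This requires $R_n\ge 4(R_{\eta_n}+A_n+1)$, which is built into \eqref{eq:Rn-window}, together with $\eta_n$ small enough for Lemma~\ref{lem:good-time-positive}, which holds for large $n$ since $\eta_n\downarrow0$. At the endpoints we then use only \eqref{eq:coarse-endpoint}, which gives $|M_{R_n,Z_n}(t_{i,n})|\le C_uR_n^2=o(T_n)$ by \eqref{eq:coarse-compatible}. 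This contradicts the linear lower bound for large $n$.

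There is no real obstacle. The one point to state carefully is that the constant $C_u$ in \eqref{eq:coarse-endpoint} is uniform in $t$, $R$ and $Z$. This follows from the uniform gradient bound \eqref{eq:gradient-gap}. It matters because the center $Z_n$ and radius $R_n$ vary with $n$.
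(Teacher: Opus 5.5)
Your proposal is correct. Your second, self-contained route is exactly the paper's proof: keep the growth bound $M_{R_n,Z_n}(t_{2,n})-M_{R_n,Z_n}(t_{1,n})\ge\gamma T_n$ and replace Lemma~\ref{lem:omega-endpoint} by \eqref{eq:coarse-endpoint}. Your first route is also valid and rests on the same H\"older--Sobolev bound: you note that $\omega_u(3R)\lesssim\sup_t\|\na u(t)\|_2^2$ is uniformly bounded, so \eqref{eq:coarse-compatible} implies \eqref{eq:omega-compatible}.
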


\begin{proof}
The positive-growth part of the proof of Theorem~\ref{thm:window-omega} is unchanged and gives
\[
  M_{R_n,Z_n}(t_{2,n})-M_{R_n,Z_n}(t_{1,n})\ge \gamma T_n.
\]
Using \eqref{eq:coarse-endpoint} instead, the endpoints satisfy
\[
  |M_{R_n,Z_n}(t_{2,n})|+|M_{R_n,Z_n}(t_{1,n})|
  \le 2C_uR_n^2=o(T_n),
\]
which is impossible.
\end{proof}

\begin{proof}[Proof of Theorem~\ref{thm:window-rigidity}]
By \eqref{eq:Aopt-subdiff-main}, choose $S_n\to\infty$ such that
\begin{equation}\label{eq:Aopt-seq-small}
  \Aopt_{S_n}(\delta_0)S_n^{-1/2}\to0.
\end{equation}
For each $j$, take $\eta_j\downarrow0$.  Passing to a subsequence $T_j=S_{n_j}$, we may require
\begin{equation}\label{eq:subsequence-dominates-tightness}
  \frac{R_{\eta_j}^2}{T_j}\to0,
  \qquad
  \frac{\Aopt_{T_j}(\delta_0)^2}{T_j}\to0.
\end{equation}
By the definition of $\Aopt$, choose $A_j\le \Aopt_{T_j}(\delta_0)+1$ and $Z_j\in\R$ such that
\[
  \left|\{t\in[0,T_j]: |z(t)-Z_j|>A_j\}\right|\le(\delta_0+o(1))T_j.
\]
Increasing $\delta_0$ slightly while keeping it below $\delta_{\rm opt}$, the bad-time condition of Proposition~\ref{prop:coarse-window} holds.  With
\[
  R_j=4(R_{\eta_j}+A_j+1)+T_j^{1/4},
\]
\eqref{eq:subsequence-dominates-tightness} gives $R_j^2/T_j\to0$.  Proposition~\ref{prop:coarse-window} excludes $u$.
\end{proof}

We record several useful reformulations of the same window condition.

\begin{proposition}[Occupation time]\label{prop:occupation-window}
Let $u$ be a nonzero soliton-like cylindrical compact critical element.  Suppose that for some $\delta_0\in(0,\delta_{\rm opt})$ there are $T_n\to\infty$, $A_n=o(T_n^{1/2})$, and $Z_n\in\R$ such that
\[
  |\{t\in[0,T_n]: |z(t)-Z_n|>A_n\}|\le\delta_0T_n.
\]
Then no such $u$ exists.
\end{proposition}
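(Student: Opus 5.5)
This is a direct consequence of Proposition~\ref{prop:coarse-window}. We apply it along a subsequence on which the tightness radius is also negligible against $T_n^{1/2}$. The given data $T_n\to\infty$, $A_n=o(T_n^{1/2})$, $Z_n$ already satisfy the bad-time bound \eqref{eq:bad-time-window} with the fixed $\delta_0<\delta_{\rm opt}$. What remains is to choose $\eta_n\downarrow0$ so that the radius $R_n$ of \eqref{eq:Rn-window} obeys \eqref{eq:coarse-compatible}.

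\emph{Choice of $\eta_n$.} Since $T_n\to\infty$, we select the tightness levels slowly. Fix any sequence $\eta_j\downarrow0$, with associated radii $R_{\eta_j}<\infty$ from \eqref{eq:tightness}. For each $j$, eventually $T_n\ge j^2R_{\eta_j}^2$. Let $n_j$ be the first index beyond which this holds, and make the $n_j$ increasing. For $n_j\le n<n_{j+1}$ set $\eta_n'=\eta_j$. Then $\eta'_n\downarrow0$ and $R_{\eta'_n}^2/T_n\le j^{-2}\to0$. This diagonal choice avoids passing to a subsequence. Alternatively, pass to a subsequence of $(T_n)$ exactly as in the proof of Theorem~\ref{thm:window-rigidity}. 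The window data are unaffected by either choice, since the bad-time condition is given for every $n$.

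\emph{Verification of \eqref{eq:coarse-compatible}.} With $R_n=4(R_{\eta'_n}+A_n+1)+T_n^{1/4}$ we have
\[
\frac{R_n^2}{T_n}\le C\Bigl(\frac{R_{\eta'_n}^2}{T_n}+\frac{A_n^2}{T_n}+\frac1{T_n}+T_n^{-1/2}\Bigr)\to0.
\]
The first term tends to zero by construction, the second by $A_n=o(T_n^{1/2})$, and the last two trivially. Proposition~\ref{prop:coarse-window}, whose positive-growth mechanism from Theorem~\ref{thm:window-omega} requires only $\delta_0<\delta_{\rm opt}$ and Lemmas~\ref{lem:good-time-positive} and \ref{lem:rough-lower}, then gives the contradiction $\gamma T_n\le 2C_uR_n^2=o(T_n)$. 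Hence no such $u$ exists.

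The only real point is the compatibility between $\eta_n\downarrow0$, needed for good-time positivity, and $R_{\eta_n}=o(T_n^{1/2})$. Since $R_\eta$ may grow arbitrarily fast as $\eta\downarrow0$, this is resolved by the slow diagonal selection above. In fact, a single fixed small $\eta$ with $C\eta\le c_1$ already suffices for Lemma~\ref{lem:good-time-positive}, so one could even take $\eta_n$ eventually constant if the theorem's hypothesis is read with that flexibility.
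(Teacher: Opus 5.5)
Your proof is correct and is essentially the paper's argument. The paper likewise picks $\eta_n\downarrow0$, passes to a subsequence with $R_{\eta_n}^2/T_n\to0$, checks that $R_n=4(R_{\eta_n}+A_n+1)+T_n^{1/4}$ satisfies $R_n^2/T_n\to0$, and invokes Proposition~\ref{prop:coarse-window}; your diagonal choice of $\eta_n'$, and your remark that a fixed small $\eta$ already suffices for Lemma~\ref{lem:good-time-positive}, are equivalent ways of reconciling $\eta_n\downarrow0$ with $R_{\eta_n}=o(T_n^{1/2})$.
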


\begin{proof}
Choose $\eta_n\downarrow0$ and pass to a subsequence with $R_{\eta_n}^2/T_n\to0$.  Let $R_n=4(R_{\eta_n}+A_n+1)+T_n^{1/4}$.  Then $R_n^2/T_n\to0$, and Proposition~\ref{prop:coarse-window} applies.
\end{proof}

The occupation-time formulation contains pointwise subdiffusive drift and averaged drift conditions.

\begin{proposition}[Drift forms]\label{prop:drift-criteria}
Each of the following conditions excludes a nonzero soliton-like cylindrical compact critical element:
\begin{enumerate}[label=\textup{(\alph*)}]
\item there exist $T_n\to\infty$ and $Z_n\in\R$ such that
\[
  \sup_{0\le t\le T_n}|z(t)-Z_n|=o(T_n^{1/2});
\]
\item there exist $p>0$ and $Z_T\in\R$ such that
\begin{equation}\label{eq:Lp-drift-condition}
  \int_0^T |z(t)-Z_T|^p\dd t=o(T^{1+p/2}).
\end{equation}
\end{enumerate}
In particular, $p=2$ gives a time-averaged subdiffusive drift condition.
\end{proposition}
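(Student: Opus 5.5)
Both conditions will be reduced to Proposition~\ref{prop:occupation-window}: for a suitable sequence $T_n\to\infty$ we produce a window radius $A_n=o(T_n^{1/2})$ and a fixed axis $Z_n$ whose bad-time set has measure at most $\delta_0T_n$, for some fixed $\delta_0\in(0,\delta_{\rm opt})$. Since $\delta_{\rm opt}>0$ is an absolute quantity determined by $c_1$ and $C_E$, we may fix any such $\delta_0$, say $\delta_0=\delta_{\rm opt}/2$, at the outset.

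\emph{Condition (a).} Set
\[
  A_n=\sup_{0\le t\le T_n}|z(t)-Z_n|,
\]
which is $o(T_n^{1/2})$ by hypothesis. With this choice the bad set $\{t\in[0,T_n]:|z(t)-Z_n|>A_n\}$ is empty, so the occupation condition holds trivially with any $\delta_0$. Proposition~\ref{prop:occupation-window} then applies directly. Note that no measurability issue arises here: $z$ enters only through this set, which is empty.

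\emph{Condition (b).} We use Chebyshev's inequality. Write the hypothesis as
\[
  \int_0^T|z(t)-Z_T|^p\dd t=\eps(T)\,T^{1+p/2},\qquad \eps(T)\to0 .
\]
Take any sequence $T_n\to\infty$ and set $Z_n=Z_{T_n}$ and
\[
  A_n=\bigl(\eps(T_n)/\delta_0\bigr)^{1/p}\,T_n^{1/2}+\eps(T_n)^{1/(2p)}T_n^{1/2}.
\]
The second summand is a safeguard ensuring $A_n>0$; both summands are $o(T_n^{1/2})$. Chebyshev's inequality gives
\[
  \bigl|\{t\in[0,T_n]:|z(t)-Z_n|>A_n\}\bigr|
  \le A_n^{-p}\int_0^{T_n}|z(t)-Z_n|^p\dd t
  \le \frac{\eps(T_n)T_n^{1+p/2}}{(\eps(T_n)/\delta_0)\,T_n^{p/2}}
  =\delta_0T_n .
\]
This is exactly the occupation hypothesis, so Proposition~\ref{prop:occupation-window} excludes $u$. (If $\eps(T_n)=0$, the bad set has measure zero and the bound holds trivially.) Taking $p=2$ gives the stated time-averaged subdiffusive form.

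\emph{Where care is needed.} There is no real obstacle, since (a) and (b) are direct corollaries of Proposition~\ref{prop:occupation-window}. The only bookkeeping points are: choosing $A_n$ so that it is simultaneously $o(T_n^{1/2})$ and large enough for Chebyshev's inequality to give a bad-time proportion at most $\delta_0$; and noting that the integral in (b) presupposes that $z$ is measurable. If measurability is not given, one should first observe that the axial center can be taken measurable. Since $K_z$ is precompact, one may replace $z(t)$ by a piecewise-constant or continuous selection that changes $z$ by at most a bounded amount. This preserves \eqref{eq:tightness} after enlarging $R_\eta$, and does not affect the $o(T^{1/2})$ scales.
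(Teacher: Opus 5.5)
Your proposal is correct and follows the paper's proof essentially verbatim. Part (a) uses the empty bad set, and part (b) uses Markov's inequality with $A_n$ equal to the paper's $A_T=\bigl(\frac{1}{\delta_0T}\int_0^T|z(t)-Z_T|^p\dd t\bigr)^{1/p}$ plus your second summand, which is redundant because it vanishes exactly when the first does; both parts feed into Proposition~\ref{prop:occupation-window}, and your handling of the degenerate case $\eps(T_n)=0$ and the measurability aside are harmless additions.
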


\begin{proof}
For (a), take $A_n=\sup_{0\le t\le T_n}|z(t)-Z_n|$.  The bad-time set is empty and Proposition~\ref{prop:occupation-window} applies.  For (b), fix $\delta_0\in(0,\delta_{\rm opt})$ and set
\[
  A_T=\left(\frac{1}{\delta_0T}\int_0^T|z(t)-Z_T|^p\dd t\right)^{1/p}.
\]
Markov's inequality gives $|\{t: |z(t)-Z_T|>A_T\}|\le\delta_0T$, and \eqref{eq:Lp-drift-condition} gives $A_T=o(T^{1/2})$.  Proposition~\ref{prop:occupation-window} applies.
\end{proof}

Proposition~\ref{prop:drift-criteria} is a convenient reformulation of Theorem~\ref{thm:window-rigidity} in pointwise and averaged terms.  The next statement uses the Morrey endpoint from Remark~\ref{rem:morrey}.

\begin{proposition}[Morrey window]\label{prop:morrey-window}
Let $u$ be a nonzero soliton-like cylindrical compact critical element satisfying the Morrey local-mass growth \eqref{eq:morrey-mass} for some $0\le\sigma<1$.  Suppose that there exist $\delta_0\in(0,\delta_{\rm opt})$, $T_n\to\infty$, $A_n\ge0$, $Z_n\in\R$, and $\eta_n\downarrow0$ such that \eqref{eq:bad-time-window} holds and
\[
  R_n=4(R_{\eta_n}+A_n+1)+T_n^{1/4},
  \qquad \frac{R_n^{1+\sigma}}{T_n}\to0.
\]
Then no such $u$ exists.
\end{proposition}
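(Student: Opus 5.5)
The argument repeats the proof of Theorem~\ref{thm:window-omega}, changing only the endpoint estimate. The positive-growth half carries over unchanged, since it uses only \eqref{eq:bad-time-window}, the choice of $R_n$ in \eqref{eq:Rn-window}, Lemma~\ref{lem:good-time-positive} on good times, and Lemma~\ref{lem:rough-lower} on bad times. Fix $\alpha\in(\delta_0,1/2)$ with $\gamma=3c_1(1-2\alpha)-(3c_1+C_E)\delta_0>0$, which is possible because $\delta_0<\delta_{\rm opt}$. Pick good endpoints $t_{1,n}\in[0,\alpha T_n]\cap G_n$ and $t_{2,n}\in[(1-\alpha)T_n,T_n]\cap G_n$. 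Integrating $M'_{R_n,Z_n}$ between them gives
\[
  M_{R_n,Z_n}(t_{2,n})-M_{R_n,Z_n}(t_{1,n})\ge\gamma T_n .
\]

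For the endpoints I would make Remark~\ref{rem:morrey} precise, in the same way as in Lemma~\ref{lem:omega-endpoint}. At $t=t_{i,n}\in G_n$ we have $|z(t)-Z_n|\le A_n\le R_n/4$, so $\{|x-Z_n\ez|\le 2R_n\}\subset\{|x-z(t)\ez|\le 3R_n\}$. Then \eqref{eq:cutoff-bounds} and Cauchy--Schwarz give
\[
  |M_{R_n,Z_n}(t)|\le CR_n\|\na u(t)\|_2\Bigl(\int_{|x-z(t)\ez|\le3R_n}|u(t)|^2\dd x\Bigr)^{1/2}.
\]
By \eqref{eq:morrey-mass} with radius $3R_n\ge1$, the local mass is at most $C_\sigma(3R_n)^{2\sigma}$. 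Combined with the threshold gradient bound \eqref{eq:gradient-gap}, this yields $|M_{R_n,Z_n}(t_{i,n})|\le C_{u,\sigma}R_n^{1+\sigma}$.

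Summing the two endpoint bounds gives $|M(t_{1,n})|+|M(t_{2,n})|\le 2C_{u,\sigma}R_n^{1+\sigma}=o(T_n)$ by hypothesis. This contradicts the lower bound $\gamma T_n$ once $n$ is large, so no such $u$ exists.

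There is no real obstacle here. The only points to check are that the Morrey bound is centered at $z(t)$ rather than at $Z_n$, which the inclusion of balls handles, and that it is only needed at good endpoints. That is why the endpoints are chosen in $G_n$, exactly as in Theorem~\ref{thm:window-omega}.
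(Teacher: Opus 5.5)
Your proposal is correct and matches the paper's proof essentially verbatim. The linear growth $\gamma T_n$ is inherited unchanged from Theorem~\ref{thm:window-omega}. The good endpoints are then handled by the inclusion $\{|x-Z_n\ez|\le 2R_n\}\subset\{|x-z(t_{i,n})\ez|\le 3R_n\}$, Cauchy--Schwarz and the Morrey bound, which give $|M_{R_n,Z_n}(t_{i,n})|\lesssim R_n^{1+\sigma}=o(T_n)$.
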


\begin{proof}
The positive virial-growth integral is the same as in Theorem~\ref{thm:window-omega}.  It remains to check the endpoints.  For $t_{i,n}\in G_n$,
\[
  \{|x-Z_n\ez|\le2R_n\}\subset\{|x-z(t_{i,n})\ez|\le3R_n\}.
\]
By the definition of $M_{R,Z}$, Cauchy-Schwarz, and \eqref{eq:morrey-mass},
\begin{align*}
  |M_{R_n,Z_n}(t_{i,n})|
  &\le C R_n\|\na u(t_{i,n})\|_2
      \left(\int_{|x-Z_n\ez|\le2R_n}|u(t_{i,n})|^2\dd x\right)^{1/2} \\
  &\le C_u R_n
      \left(\int_{|x-z(t_{i,n})\ez|\le3R_n}|u(t_{i,n})|^2\dd x\right)^{1/2}
  \le C_u R_n^{1+\sigma}.
\end{align*}
The endpoint sum is $o(T_n)$, contradicting the linear virial growth.
\end{proof}

Proposition~\ref{prop:morrey-window} is the same endpoint-budget principle under Morrey mass growth.  Together with Theorem~\ref{thm:window-omega}, it shows that the window mechanism adjusts continuously to the available local-mass information.

\begin{remark}[Fixed axis]
If the virial center were taken to be the time-dependent center $z(t)$, derivative terms involving $z'(t)$ would appear.  In the compactness reduction, $z(t)$ is usually only a modulation center and need not be sufficiently regular.  We avoid this by choosing a constant axis $Z_n$ on each interval $[0,T_n]$ and absorbing the times at which $z(t)$ leaves the corresponding window.  Thus the argument requires only time-distribution information for $z(t)$ and not any derivative of $z(t)$.
\end{remark}

\section{Low-frequency tail smallness and enhanced compactness}\label{sec:lowfreq}

This section proves the compactness part of Theorem~\ref{thm:lowfreq-rigidity}.  All Fourier projections use the sharp cutoff notation introduced in the first section.  In the proof of $L^2$ precompactness, $N$ denotes the low-frequency cutoff and $M$ the high-frequency cutoff.  Low-frequency tail smallness is a Fourier condition, but its role is precisely to rule out hidden infinite mass at zero frequency; see Figure~\ref{fig:lowfreq-schematic}.

\begin{figure}[H]
\centering
\includegraphics[width=0.86\textwidth]{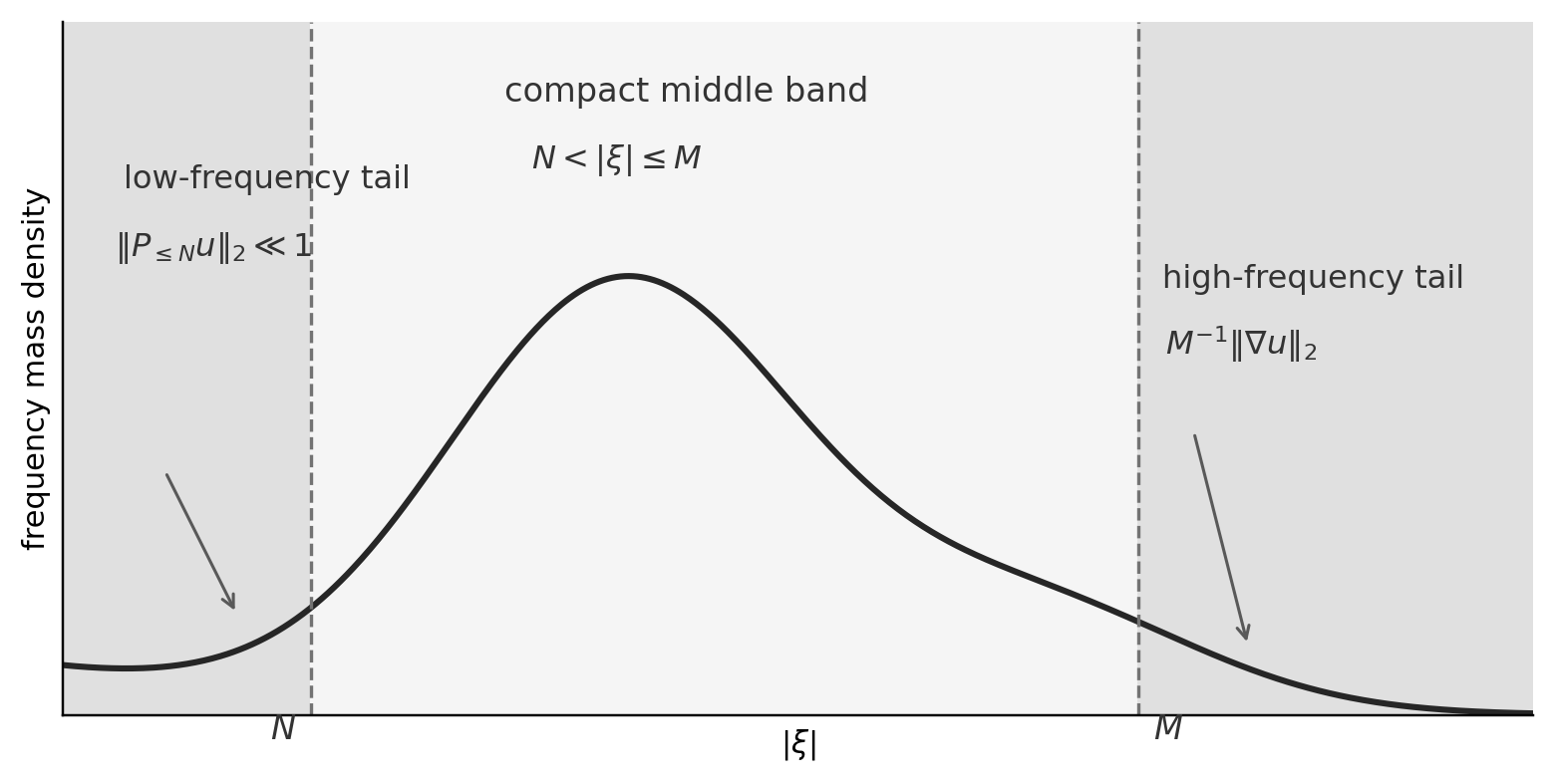}
\caption{Role of low-frequency tail smallness in frequency space.  The low-frequency tail is controlled by assumption, the high-frequency tail by $M^{-1}\|\na u\|_2$, and on the middle band the $L^2$ distance is controlled by $N^{-1}\|\na(f-g)\|_2$.  Thus $\dot H^1$ precompactness upgrades to $L^2$ precompactness.}
\label{fig:lowfreq-schematic}
\end{figure}

First, the low-frequency entrance excludes infinite mass at the origin of frequency space.

\begin{lemma}[Finite mass]\label{lem:lowfreq-finite-mass}
Let $u$ be a nonzero compact critical element satisfying \eqref{eq:lowfreq-small-intro}.  Then
\begin{equation}\label{eq:finite-mass-conclusion}
  \sup_{t\in I}\|u(t)\|_2<\infty.
\end{equation}
In particular $u(t)\in H^1(\R^3)$, and the mass $M(u)$ is finite and conserved.
\end{lemma}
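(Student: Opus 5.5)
The plan is to split $u(t)$ by frequency using Plancherel, $\|u(t)\|_2^2=\|P_{\le N}u(t)\|_2^2+\|P_{>N}u(t)\|_2^2$, with a single fixed $N>0$. Since $P_{\le N}$ is a sharp Fourier cutoff, the two pieces are orthogonal in $L^2$. Hypothesis \eqref{eq:lowfreq-small-intro} lets us choose $N_0>0$ so that $\sup_{t\in I}\|P_{\le N_0}u(t)\|_2\le1$. This hypothesis is meaningful only if $P_{\le N}u(t)$ is an $L^2$ function. For $u(t)\in\dot H^1(\R^3)$ one has $\widehat{u(t)}\in L^2(|\xi|^2\dd\xi)$, so $\widehat{u(t)}$ is locally integrable. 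I would read the assumption as saying that $\one_{\{|\xi|\le N\}}\widehat{u(t)}\in L^2$ with the stated uniform bound, which is exactly the missing low-frequency information.

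The high-frequency part is controlled directly. On $\{|\xi|>N_0\}$ we have $1\le|\xi|^2/N_0^2$, so
\[
  \|P_{>N_0}u(t)\|_2^2=\int_{|\xi|>N_0}|\widehat{u(t)}(\xi)|^2\dd\xi
  \le N_0^{-2}\int|\xi|^2|\widehat{u(t)}|^2\dd\xi=N_0^{-2}\|\na u(t)\|_2^2 .
\]
By the threshold bound \eqref{eq:gradient-gap}, this is at most $N_0^{-2}\|\na W\|_2^2$ uniformly in $t$. Combining the two pieces gives $\sup_t\|u(t)\|_2^2\le1+N_0^{-2}\|\na W\|_2^2<\infty$, which is \eqref{eq:finite-mass-conclusion}. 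It follows that $u(t)\in H^1$ for every $t\in I$.

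For conservation I would use that the solution, being an $\dot H^1$ solution with $u(t_0)\in H^1$ at some time, coincides with the $H^1$ solution. Uniqueness in the critical Strichartz class together with persistence of $L^2$ regularity in the local theory (Proposition~\ref{prop:LWP}, run at the $L^2$ level with the same $S(I)$ control) gives this. The standard $H^1$ mass conservation then applies, or equivalently one can integrate \eqref{eq:mass-local} against a cutoff $\phi(x/R)$ and let $R\to\infty$. The cutoff route works because the flux term is bounded by $R^{-1}\|u\|_2\|\na u\|_2\to0$.

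The main obstacle is this last step: justifying that the mass is finite and constant in time, rather than merely bounded uniformly. In the cutoff argument, the derivative of the localized mass $\int\phi(x/R)|u|^2$ is $O(R^{-1})$ uniformly in $t$ by the uniform bound just proved. Hence on any bounded interval $[t_1,t_2]$ the localized masses at $t_1$ and $t_2$ differ by $O(|t_2-t_1|/R)$. Letting $R\to\infty$, using monotone convergence and finiteness of $\|u(t)\|_2$, yields $M(u(t_1))=M(u(t_2))$. This bypasses any persistence-of-regularity issue.
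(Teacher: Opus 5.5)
Your argument is correct and is essentially the paper's proof. It uses the same Plancherel split at a fixed $N_0$: the low-frequency piece is bounded by the hypothesis, and the high-frequency piece by $N_0^{-2}\|\na u(t)\|_2^2$ together with the threshold gradient bound. Your cutoff argument for conservation, with flux bounded by $CR^{-1}\|u(t)\|_2\|\na u(t)\|_2$ and then $R\to\infty$, is a valid justification of a point the paper only asserts.
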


\begin{proof}
Choose $N_0\in(0,1)$ with $\sup_t\|P_{\le N_0}u(t)\|_2<\infty$.  For the high-frequency part,
\[
  \|P_{>N_0}u(t)\|_2^2
  =\int_{|\xi|>N_0}|\widehat u(t,\xi)|^2\dd\xi
  \le N_0^{-2}\int_{|\xi|>N_0}|\xi|^2|\widehat u(t,\xi)|^2\dd\xi
  \le N_0^{-2}\|\na u(t)\|_2^2.
\]
The threshold gradient bound then gives \eqref{eq:finite-mass-conclusion}.
\end{proof}

Finite mass alone is not enough for the later center-of-mass argument.  We also need to upgrade $\dot H^1$ precompactness to $L^2$ precompactness.

\begin{lemma}[$L^2$ precompactness]\label{lem:lowfreq-L2-precompact}
Assume that $K_z$ is precompact in $\dot H^1$ and that \eqref{eq:lowfreq-small-intro} holds.  Then $K_z$ is precompact in $L^2(\R^3)$.
\end{lemma}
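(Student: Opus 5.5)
The plan is to prove total boundedness of $K_z$ in $L^2$ directly, splitting each element into three frequency bands exactly as in Figure~\ref{fig:lowfreq-schematic}. Write $v(t)=u(t,\cdot+z(t)\ez)$. Translation commutes with $P_{\le N}$ and preserves $L^2$ norms, so \eqref{eq:lowfreq-small-intro} holds equally for $v$:
\[
  \lim_{N\downarrow0}\sup_t\|P_{\le N}v(t)\|_2=0.
\]
By Lemma~\ref{lem:lowfreq-finite-mass}, $K_z$ is a bounded subset of $L^2$. So it suffices to show that for every $\eps>0$, $K_z$ is covered by finitely many $L^2$-balls of radius $C\eps$.

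Fix $\eps>0$ and choose the two cutoffs as follows.
\begin{enumerate}[label=\textup{(\arabic*)}]
\item Take $N\in(0,1)$ with $\sup_t\|P_{\le N}v(t)\|_2\le\eps$, using the low-frequency hypothesis.
\item Take $M\ge1$ with $M^{-1}\sup_t\|\na v(t)\|_2\le\eps$, using the threshold gradient bound \eqref{eq:gradient-gap} and Plancherel, exactly as in the proof of Lemma~\ref{lem:lowfreq-finite-mass}. This gives $\|P_{>M}v(t)\|_2\le\eps$.
\end{enumerate}
On the middle band $N<|\xi|\le M$ we use $|\xi|>N$ to get, for any $f,g\in\dot H^1$,
\[
  \|P_{N<\cdot\le M}(f-g)\|_2\le N^{-1}\|\na(f-g)\|_2.
\]
Since $K_z$ is precompact in $\dot H^1$, pick finitely many $v(t_1),\dots,v(t_m)\in K_z$ forming an $N\eps$-net of $K_z$ in $\dot H^1$. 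For each $t$ choose $i$ with $\|\na(v(t)-v(t_i))\|_2\le N\eps$. Splitting $v(t)-v(t_i)$ into the three bands then gives
\[
  \|v(t)-v(t_i)\|_2\le 2\eps+2\eps+N^{-1}\cdot N\eps=5\eps .
\]
Hence $\{v(t_i)\}$ is a finite $5\eps$-net of $K_z$ in $L^2$, and these centers lie in $L^2$ by finite mass. Completeness of $L^2$ then gives precompactness.

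The only delicate point is the order of quantifiers. The low-frequency cutoff $N$ must be fixed first, since the $\dot H^1$-net radius $N\eps$ depends on it. Because the hypothesis is uniform in $t$ and the $\dot H^1$ net is then chosen afterwards, no circularity arises. One should also check that the sharp projection $P_{\le N}$ commutes with axial translation, which is immediate since translation acts on the Fourier side by multiplication by $e^{\ii z(t)\xi_3}$, preserving $|\widehat v|$. I do not expect any real obstacle beyond these bookkeeping points.
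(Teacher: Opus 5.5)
Your proposal is correct and follows essentially the same argument as the paper: fix the low-frequency cutoff $N$ first, then the high cutoff $M$ from the uniform gradient bound, bound the middle band by $N^{-1}\|\na(f-g)\|_2$, and take an $N\eps$-net in $\dot H^1$ to get a $5\eps$-net in $L^2$. Your remark that the net points lie in $L^2$ by Lemma~\ref{lem:lowfreq-finite-mass} makes explicit a point the paper leaves implicit.
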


\begin{proof}
Set $f_t(x)=u(t,x+z(t)\ez)$.  Translations do not change $|\widehat f|$, hence
\[
  \sup_t\|P_{\le N}f_t\|_2=\sup_t\|P_{\le N}u(t)\|_2.
\]
Given $\eps>0$, first choose $N\in(0,1)$ such that
\begin{equation}\label{eq:lowfreq-eps}
  \sup_t\|P_{\le N}f_t\|_2\le\eps.
\end{equation}
Then choose $M>N$ sufficiently large so that
\begin{equation}\label{eq:highfreq-eps}
  M^{-1}\sup_t\|\na f_t\|_2\le\eps.
\end{equation}
For $f,g\in K_z$, decompose
\[
  f-g=P_{\le N}(f-g)+P_{N<\cdot\le M}(f-g)+P_{>M}(f-g).
\]
By \eqref{eq:lowfreq-eps}, \eqref{eq:highfreq-eps}, and Plancherel,
\begin{align}
  \|f-g\|_2
  &\le 2\eps+N^{-1}\|\na(f-g)\|_2+2M^{-1}\sup_t\|\na f_t\|_2 \notag\\
  &\le 4\eps+N^{-1}\|\na(f-g)\|_2.\label{eq:L2-by-Hdot}
\end{align}
Since $K_z$ is precompact in $\dot H^1$, there exist $g_1,\dots,g_J\in K_z$ such that for every $f\in K_z$ some $j$ satisfies
\[
  \|\na(f-g_j)\|_2\le N\eps.
\]
Then \eqref{eq:L2-by-Hdot} yields $\|f-g_j\|_2\le5\eps$.  Hence $K_z$ is totally bounded in $L^2$, and precompactness follows.
\end{proof}

We shall use the following elementary compactness fact to convert functional-space compactness into a uniform spatial tail estimate.

\begin{lemma}[Uniform tail]\label{lem:compact-tail}
Let $\mathcal K\subset L^2(\R^3)$ be precompact in $L^2$.  Then for every $\eta>0$ there exists $L<\infty$ such that
\begin{equation}\label{eq:L2-compact-tail}
  \sup_{f\in\mathcal K}\int_{|x|\ge L}|f(x)|^2\dd x\le\eta.
\end{equation}
Similarly, if $\mathcal K\subset\dot H^1$ is precompact in $\dot H^1$, then $L$ can be chosen so that the gradient tail is uniformly small.
\end{lemma}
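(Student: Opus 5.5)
The plan is the standard total-boundedness argument: approximate $\mathcal K$ by a finite net and use the fact that each single function in the net has small tails.

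Fix $\eta>0$. Since $\mathcal K$ is precompact in $L^2$, it is totally bounded. Choose finitely many $g_1,\dots,g_J\in L^2(\R^3)$ such that every $f\in\mathcal K$ satisfies $\|f-g_j\|_2\le\eta^{1/2}/2$ for some $j$. These can be taken in the closure of $\mathcal K$ or in $L^2$ itself; it does not matter.

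Each $g_j$ is a fixed $L^2$ function. By dominated convergence, $\int_{|x|\ge L}|g_j|^2\dd x\to0$ as $L\to\infty$. Since there are only finitely many $g_j$, we can choose one $L$ with
\[
\max_{1\le j\le J}\Bigl(\int_{|x|\ge L}|g_j|^2\dd x\Bigr)^{1/2}\le \eta^{1/2}/2 .
\]

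Now take any $f\in\mathcal K$ and pick its $g_j$. Apply the triangle inequality in $L^2(\{|x|\ge L\})$:
\[
\Bigl(\int_{|x|\ge L}|f|^2\dd x\Bigr)^{1/2}\le \|f-g_j\|_2+\Bigl(\int_{|x|\ge L}|g_j|^2\dd x\Bigr)^{1/2}\le\eta^{1/2}.
\]
Squaring gives \eqref{eq:L2-compact-tail}.

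For the $\dot H^1$ statement the argument is identical, with $L^2$ replaced by the gradients. We use the net $\|\na(f-g_j)\|_2\le\eta^{1/2}/2$, and the tail integral $\int_{|x|\ge L}|\na g_j|^2\dd x\to0$ for each of the finitely many $\na g_j\in L^2$.

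There is no real obstacle here. The only point that needs attention is that $L$ is chosen after the finite net is fixed, so that it is uniform in $j$ and hence in $f\in\mathcal K$. If one also wants the $L^6$ tail, as in \eqref{eq:tightness}, it follows the same way, since $\dot H^1$-precompactness implies $L^6$-precompactness by Sobolev embedding.
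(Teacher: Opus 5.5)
Your proposal is correct and follows essentially the same route as the paper: a finite $L^2$-net, a single radius $L$ chosen after the net is fixed so that it controls the tails of all the $g_j$, and the triangle inequality on $\{|x|\ge L\}$, with the $\dot H^1$ case obtained by applying the same argument to the gradients. The only difference is cosmetic: the paper uses radius $\eta^{1/2}/3$ to get $\tfrac49\eta$, whereas your $\eta^{1/2}/2$ gives exactly $\eta$, which also suffices.
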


\begin{proof}
We first prove the $L^2$ case.  Given $\eta>0$, set
\[
  \delta=\frac{\eta^{1/2}}{3}.
\]
By precompactness there exist $g_1,\ldots,g_J\in L^2(\R^3)$ such that
\[
  \mathcal K\subset \bigcup_{j=1}^J B_{L^2}(g_j,\delta),
  \qquad B_{L^2}(g_j,\delta)=\{h:\|h-g_j\|_2<\delta\}.
\]
For each $j$, choose $L_j<\infty$ so that
\[
  \int_{|x|\ge L_j}|g_j(x)|^2\dd x\le\delta^2.
\]
Let $L=\max_{1\le j\le J}L_j$.  For $f\in\mathcal K$, choose $j(f)$ with $\|f-g_{j(f)}\|_2<\delta$.  Then
\begin{align*}
  \left(\int_{|x|\ge L}|f(x)|^2\dd x\right)^{1/2}
  &\le \left(\int_{|x|\ge L}|f(x)-g_{j(f)}(x)|^2\dd x\right)^{1/2} \\
  &\quad +\left(\int_{|x|\ge L}|g_{j(f)}(x)|^2\dd x\right)^{1/2} \\
  &\le \|f-g_{j(f)}\|_2+\delta\le2\delta.
\end{align*}
Thus
\[
  \int_{|x|\ge L}|f(x)|^2\dd x\le4\delta^2=\frac49\eta<\eta.
\]
Taking the supremum over $f\in\mathcal K$ gives \eqref{eq:L2-compact-tail}.  If $\mathcal K\subset\dot H^1$ is precompact, then
\[
  \nabla\mathcal K:=\{\na f:f\in\mathcal K\}
\]
is precompact in $L^2(\R^3;\C^3)$.  Applying the same argument to $\nabla\mathcal K$ gives
\[
  \sup_{f\in\mathcal K}\int_{|x|\ge L}|\na f(x)|^2\dd x\le\eta.
\]
\end{proof}

Combining $L^2$ precompactness, $\dot H^1$ precompactness, and the soliton-like tail gives the enhanced compactness needed for the finite-mass virial argument.

\begin{proposition}[Enhanced compactness]\label{prop:enhanced-tightness}
Under the hypotheses of Lemma~\ref{lem:lowfreq-L2-precompact}, for every $\eta>0$ there exists $L_\eta<\infty$ such that
\begin{equation}\label{eq:enhanced-tightness}
  \sup_{t\in I}\int_{|x-z(t)\ez|\ge L_\eta}
  \bigl(|u(t,x)|^2+|\na u(t,x)|^2+|u(t,x)|^6\bigr)\dd x\le\eta.
\end{equation}
\end{proposition}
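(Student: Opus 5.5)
The argument reduces Proposition~\ref{prop:enhanced-tightness} to the uniform tail estimate of Lemma~\ref{lem:compact-tail}, applied to the translated orbit $K_z=\{f_t:=u(t,\cdot+z(t)\ez):t\in I\}$. The key observation is that for each $t$ the change of variables $x\mapsto x+z(t)\ez$ gives
\[
  \int_{|x-z(t)\ez|\ge L}\bigl(|u(t,x)|^2+|\na u(t,x)|^2+|u(t,x)|^6\bigr)\dd x
  =\int_{|y|\ge L}\bigl(|f_t|^2+|\na f_t|^2+|f_t|^6\bigr)\dd y .
\]
It therefore suffices to find $L$ such that each of the three tails of $f_t$ outside $\{|y|\ge L\}$ is at most $\eta/3$, uniformly in $t$.

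The three tails are handled separately. First, Lemma~\ref{lem:lowfreq-L2-precompact} shows that $K_z$ is precompact in $L^2$. The first part of Lemma~\ref{lem:compact-tail} then gives $L_1$ with $\sup_t\int_{|y|\ge L_1}|f_t|^2\le\eta/3$. Second, $K_z$ is precompact in $\dot H^1$ by Definition~\ref{def:compact-element}. The gradient part of Lemma~\ref{lem:compact-tail} gives $L_2$ with $\sup_t\int_{|y|\ge L_2}|\na f_t|^2\le\eta/3$. Third, the $L^6$ tail follows directly from the tightness \eqref{eq:tightness}: taking $L_3=R_{\eta/3}$ bounds the sum of the gradient and $L^6$ tails by $\eta/3$. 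Alternatively, Sobolev embedding $\dot H^1\hookrightarrow L^6$ implies that $K_z$ is precompact in $L^6$, and the finite-net argument of Lemma~\ref{lem:compact-tail} carries over verbatim with $L^6$ norms.

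Setting $L_\eta=\max\{L_1,L_2,L_3\}$ and adding the three bounds yields \eqref{eq:enhanced-tightness}. There is no genuine obstacle here. The only point needing care is that the translation is time-dependent, so compactness has to be applied to the translated set $K_z$ rather than to the orbit $\{u(t)\}$. Lemma~\ref{lem:lowfreq-L2-precompact} is stated precisely for $K_z$, which is why its $L^2$ precompactness yields a tail bound centered at $z(t)\ez$ rather than at the origin. The substantive input, namely the upgrade from $\dot H^1$ to $L^2$ compactness via the low-frequency hypothesis \eqref{eq:lowfreq-small-intro}, has already been established in Lemma~\ref{lem:lowfreq-L2-precompact}.
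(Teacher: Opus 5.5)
Your proposal is correct and follows the paper's proof essentially verbatim. Both translate to $f_t$ and take the $L^2$ tail from Lemma~\ref{lem:lowfreq-L2-precompact} combined with Lemma~\ref{lem:compact-tail}, the gradient tail from $\dot H^1$ precompactness, and the $L^6$ tail from \eqref{eq:tightness}, then set $L_\eta=\max\{L_1,L_2,L_3\}$. As you implicitly note, $L_2$ is redundant because \eqref{eq:tightness} already controls the gradient tail.
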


\begin{proof}
Let $f_t(x)=u(t,x+z(t)\ez)$.  By $L^2$ precompactness and Lemma~\ref{lem:compact-tail}, choose $L_1$ such that
\[
  \sup_t\int_{|x|\ge L_1}|f_t(x)|^2\dd x\le \eta/3.
\]
By $\dot H^1$ precompactness, choose $L_2$ such that
\[
  \sup_t\int_{|x|\ge L_2}|\na f_t(x)|^2\dd x\le \eta/3.
\]
By soliton-like tightness \eqref{eq:tightness}, choose $L_3$ such that
\[
  \sup_t\int_{|x|\ge L_3}|f_t(x)|^6\dd x\le \eta/3.
\]
Taking $L_\eta=\max\{L_1,L_2,L_3\}$ and translating back gives \eqref{eq:enhanced-tightness}.
\end{proof}

Negative regularity is a common sufficient condition for low-frequency tail smallness.

\begin{proposition}[Negative regularity entrance]\label{prop:negative-regularity}
Let $u$ be a nonzero soliton-like cylindrical compact critical element.  If there exists $s\in(0,1]$ such that
\[
  \sup_{t\in I}\||\na|^{-s}u(t)\|_2<\infty,
\]
then $u$ satisfies \eqref{eq:lowfreq-small-intro}.
\end{proposition}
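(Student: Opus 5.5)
The estimate is a one-line Plancherel computation on the low-frequency ball. Fix $t\in I$ and $N\in(0,1]$, and write
\[
  \|P_{\le N}u(t)\|_2^2=\int_{|\xi|\le N}|\widehat{u}(t,\xi)|^2\dd\xi
  =\int_{|\xi|\le N}|\xi|^{2s}\,|\xi|^{-2s}|\widehat{u}(t,\xi)|^2\dd\xi .
\]
On the region $|\xi|\le N$ we have $|\xi|^{2s}\le N^{2s}$ because $s>0$. Hence
\[
  \|P_{\le N}u(t)\|_2^2\le N^{2s}\int_{|\xi|\le N}|\xi|^{-2s}|\widehat{u}(t,\xi)|^2\dd\xi
  \le N^{2s}\,\||\na|^{-s}u(t)\|_2^2 .
\]

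Next take the supremum over $t\in I$. Using the hypothesis, this gives
\[
  \sup_{t\in I}\|P_{\le N}u(t)\|_2\le N^{s}\sup_{t\in I}\||\na|^{-s}u(t)\|_2 .
\]
The right-hand side tends to $0$ as $N\downarrow0$, since $s>0$ and the supremum is finite. This is exactly \eqref{eq:lowfreq-small-intro}.

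The only point needing care is to make sure every quantity is well defined. The hypothesis says that $|\xi|^{-s}\widehat{u}(t)\in L^2$ for each $t$. Together with $\widehat{u}(t)\in L^2(|\xi|^2\dd\xi)$, this shows that $\widehat{u}(t)$ is locally square integrable near the origin. Therefore $P_{\le N}u(t)$ is a genuine $L^2$ function, and Plancherel applies. The range $s\in(0,1]$ plays no role beyond $s>0$. The upper bound $s\le1$ matters only to keep the hypothesis consistent with the $\dot H^1$ setting; the case $s=1$ already yields finite mass directly, in line with Lemma~\ref{lem:lowfreq-finite-mass}.

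I do not expect any real obstacle here. The main thing is to note that the supremum in the hypothesis is uniform in time. This uniformity is what makes the resulting decay rate $N^s$ uniform in $t$, which is what \eqref{eq:lowfreq-small-intro} requires.
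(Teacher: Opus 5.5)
Your proposal is correct and matches the paper's proof: Plancherel on $\{|\xi|\le N\}$, the bound $|\xi|^{2s}\le N^{2s}$ there, and the uniform-in-time hypothesis give $\sup_t\|P_{\le N}u(t)\|_2\le N^{s}\sup_t\||\na|^{-s}u(t)\|_2\to0$. One small correction to your side remark: finite mass follows directly for every $s>0$, not only for $s=1$, via the interpolation $\|u\|_2\le\||\na|^{-s}u\|_2^{1/(1+s)}\|\na u\|_2^{s/(1+s)}$.
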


\begin{proof}
For $0<N\le1$,
\[
  \|P_{\le N}u(t)\|_2^2
  =\int_{|\xi|\le N}|\widehat u(t,\xi)|^2\dd\xi
  \le N^{2s}\int_{|\xi|\le N}|\xi|^{-2s}|\widehat u(t,\xi)|^2\dd\xi
  \le N^{2s}\||\na|^{-s}u(t)\|_2^2.
\]
Taking the supremum over $t$ and letting $N\downarrow0$ proves the claim.
\end{proof}

Proposition~\ref{prop:negative-regularity} shows that low-frequency tail smallness covers the usual negative-regularity entrance.  Theorem~\ref{thm:lowfreq-rigidity} may therefore be viewed as a low-frequency version of negative-regularity rigidity.

\subsection{Excluding finite-mass zero-momentum compact elements}

We next prove that finite mass, enhanced compactness, and zero axial momentum automatically yield a sublinear axial drift that can be excluded by localized virial.  This is the dynamical core of the low-frequency rigidity theorem.

The axial Galilean zero-momentum gauge was defined in the introduction.  We only record the formulas needed below.  For $\xi=\xi_3\ez$, the axial Galilean transform $u\mapsto u^\xi$ preserves the equation and satisfies, for $u\in H^1$,
\[
  M(u^\xi)=M(u),
  \qquad P_z(u^\xi)=P_z(u)+\xi_3M(u),
\]
and
\[
  E(u^\xi)=E(u)+\xi_3P_z(u)+\frac12\xi_3^2M(u).
\]
Choosing $\xi_3=-P_z(u)/M(u)$ gives $P_z(u^\xi)=0$.  Moreover,
\[
  \|\na u^\xi(t)\|_2^2
  =\|\na u(t)\|_2^2-\frac{P_z(u)^2}{M(u)}
  \le\|\na u(t)\|_2^2,
\]
and $E(u^\xi)\le E(u)$, so the threshold condition is preserved.  Cylindrical symmetry is also preserved.  If $z(t)$ is an axial center for $u$, then $z_\xi(t)=z(t)+2\xi_3t$ is an axial center for $u^\xi$.  Enhanced compactness is preserved because, with $y=x-2\xi t$,
\[
  |x-z_\xi(t)\ez|=|y-z(t)\ez|,
  \qquad |u^\xi(t,x)|=|u(t,y)|,
\]
and
\[
  |\na u^\xi(t,x)|^2
  \le2|\na u(t,y)|^2+2|\xi|^2|u(t,y)|^2.
\]

Let $\Phi\in C^\infty(\R)$ be odd and satisfy
\[
  \Phi(s)=s\quad(|s|\le1),
  \qquad |\Phi(s)|\le2,
  \qquad 0\le\Phi'(s)\le1,
  \qquad \Phi'(s)=0\quad(|s|\ge2).
\]
Set
\[
  \Phi_R(z)=R\Phi(z/R),
  \qquad B_R(t)=\int_{\R^3}\Phi_R(x_3)|u(t,x)|^2\dd x.
\]
By the local mass conservation law \eqref{eq:mass-local},
\begin{align*}
  B_R'(t)
  &=\int \Phi_R(x_3)\pa_t\rho(t,x)\dd x
  =-2\int \Phi_R(x_3)\operatorname{div}j(t,x)\dd x
  =2\int \Phi_R'(x_3)j_z(t,x)\dd x.
\end{align*}
If $P_z(u)=\int j_z\dd x=0$, then
\[
  B_R'(t)=2\int (\Phi_R'(x_3)-1)j_z(t,x)\dd x.
\]
Since $\Phi_R'(x_3)-1=0$ for $|x_3|\le R$,
\begin{equation}\label{eq:BR-prime-tail}
  |B_R'(t)|\le C
  \left(\int_{|x_3|\ge R}|u(t,x)|^2\dd x\right)^{1/2}
  \left(\int_{|x_3|\ge R}|\na u(t,x)|^2\dd x\right)^{1/2}.
\end{equation}

The localized center-of-mass identity gives the axial drift bound.

\begin{lemma}[Sublinear drift]\label{lem:sublinear-drift}
Let $u$ be a finite-mass solution satisfying enhanced compactness \eqref{eq:enhanced-tightness} and $P_z(u)=0$.  Let
\[
  Z_T^*=\sup_{0\le t\le T}|z(t)|.
\]
Then
\begin{equation}\label{eq:sublinear-drift}
  \frac{Z_T^*}{T}\to0,
  \qquad T\to\infty.
\end{equation}
\end{lemma}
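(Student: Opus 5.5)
We argue by contradiction with the truncated center of mass $B_R(t)$. The two ingredients are the tail bound \eqref{eq:BR-prime-tail} on $B_R'$, and the fact that $B_R(t)\approx z(t)M(u)$ whenever the truncation window $|x_3|\le R$ contains the whole region around $z(t)$.

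Since the conclusion is insensitive to adding a constant to $z$, we normalize $z(0)=0$. We also use the standard fact that the modulation center can be taken locally bounded. Indeed, by continuity of the flow in $\dot H^1$ and precompactness of $K_z$, one has $\sup_{|t-s|\le1}|z(t)-z(s)|<\infty$ after a harmless modification of $z$ that preserves \eqref{eq:enhanced-tightness} up to enlarging $L_\eta$. In particular $Z_T^*<\infty$ for every $T$.

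Suppose \eqref{eq:sublinear-drift} fails. Then there are $\eps>0$ and $T_n\to\infty$ with $S_n:=Z_{T_n}^*\ge\eps T_n$, hence $S_n\to\infty$. Pick $t_n\in[0,T_n]$ with $|z(t_n)|\ge S_n/2$. Fix $\eta>0$, to be chosen small depending only on $\eps$ and $M(u)>0$, and set $R_n=S_n+L_\eta$. For every $t\in[0,T_n]$ we have $|z(t)|\le S_n$. Hence $\{|x_3|\ge R_n\}\subset\{|x-z(t)\ez|\ge L_\eta\}$, and \eqref{eq:BR-prime-tail} with \eqref{eq:enhanced-tightness} gives $|B_{R_n}'(t)|\le C\eta$. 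It is exactly here that $P_z(u)=0$ enters. Integrating,
\[
  |B_{R_n}(t_n)-B_{R_n}(0)|\le C\eta T_n.
\]

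Next we compare $B_{R_n}(t)$ with $z(t)M(u)$ for $t\in[0,T_n]$. We split
\[
  B_{R_n}(t)-z(t)M(u)=\int\bigl(\Phi_{R_n}(x_3)-z(t)\bigr)|u(t,x)|^2\dd x
\]
into the ball $|x-z(t)\ez|\le L_\eta$ and its complement.
\begin{itemize}
\item On the ball, $|x_3|\le S_n+L_\eta=R_n$, so $\Phi_{R_n}(x_3)=x_3$ and $|x_3-z(t)|\le L_\eta$. This part is therefore bounded by $L_\eta M(u)$.
\item On the complement, $|\Phi_{R_n}|\le2R_n$ and $|z(t)|\le R_n$. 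By \eqref{eq:enhanced-tightness} this part is at most $3R_n\eta$.
\end{itemize}
Combining this at $t=t_n$ and $t=0$ (where $z(0)=0$) with the integrated bound gives
\[
  \tfrac12 S_nM(u)\le|z(t_n)|M(u)\le 2L_\eta M(u)+6(S_n+L_\eta)\eta+C\eta T_n .
\]
Using $T_n\le S_n/\eps$, this becomes
\[
  S_n\bigl(\tfrac12M(u)-6\eta-C\eta/\eps\bigr)\le C_\eta .
\]
Choosing $\eta$ so that the bracket is at least $M(u)/4$ and letting $n\to\infty$ gives a contradiction, since $S_n\to\infty$. Note that $M(u)>0$ because $u$ is nonzero, and $B_R$ is differentiable with the stated derivative because $u\in C_tH^1$ by finite mass.

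The main obstacle is not this computation but the regularity and local boundedness of the modulation parameter $z(t)$. The argument needs $Z_T^*<\infty$ and a uniform inclusion of the core in the window $|x_3|\le R_n$ for all $t\in[0,T_n]$. I would first justify the local-in-time boundedness of $z(t)$ by the standard compactness-plus-continuity argument before running the estimate above. A secondary point is that the window $R_n$ must grow like $S_n$ rather than $T_n$. This is why the error term $R_n\eta$ has to be absorbed on the left-hand side rather than treated as lower order.
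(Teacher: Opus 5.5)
Your proof is correct and follows essentially the paper's argument: the truncated center of mass $B_R$ with $R\sim Z_T^*+L_\eta$, the zero-momentum tail bound \eqref{eq:BR-prime-tail} for $B_R'$, the ball/complement comparison $|B_R(t)-M(u)z(t)|\lesssim L_\eta M(u)+R\eta$, and absorption of the $R\eta$ term into the left-hand side. The paper argues directly, obtaining $\limsup_{T\to\infty} Z_T^*/T\le C_u\eta$, rather than by contradiction. You also check explicitly that $Z_T^*<\infty$, which the paper uses without comment; this holds without modifying $z$, because continuity of $t\mapsto u(t)$ in $L^2$ makes $\{u(t):0\le t\le T\}$ uniformly tight, while \eqref{eq:enhanced-tightness} keeps mass at least $M(u)-\eta$ within $L_\eta$ of $z(t)\ez$.
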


\begin{proof}
Fix $\eta>0$ and let $L_\eta$ be given by \eqref{eq:enhanced-tightness}.  Choose $t_T\in[0,T]$ with $|z(t_T)|\ge Z_T^*-1$ and set
\begin{equation}\label{eq:R-center-mass}
  R=4(Z_T^*+L_\eta+1).
\end{equation}
For $0\le t\le T$, if $|x_3|\ge R$, then
\[
  |x-z(t)\ez|\ge |x_3|-|z(t)|\ge 4(Z_T^*+L_\eta+1)-Z_T^*\ge L_\eta.
\]
Thus \eqref{eq:BR-prime-tail} and enhanced compactness imply
\[
  |B_R'(t)|\le C\eta,
  \qquad 0\le t\le T,
\]
and hence
\begin{equation}\label{eq:BR-difference-small}
  |B_R(t_T)-B_R(0)|\le C\eta T.
\end{equation}
If $|x-z(t)\ez|\le L_\eta$, then $|x_3|\le |z(t)|+L_\eta<R/2$, so $\Phi_R(x_3)=x_3$.  Therefore
\begin{align}
  |B_R(t)-M(u)z(t)|
  &\le \int_{|x-z(t)\ez|\le L_\eta}|x_3-z(t)||u(t,x)|^2\dd x \notag\\
  &\quad +\int_{|x-z(t)\ez|>L_\eta}|\Phi_R(x_3)-z(t)||u(t,x)|^2\dd x \notag\\
  &\le L_\eta M(u)+C(R+Z_T^*)\eta.\label{eq:BR-compare-center}
\end{align}
Using \eqref{eq:BR-compare-center} at $t=t_T$ and $t=0$, and then \eqref{eq:BR-difference-small}, we obtain
\begin{align*}
  M(u)(Z_T^*-1)
  &\le M(u)|z(t_T)| \\
  &\le |B_R(t_T)|+L_\eta M(u)+C(R+Z_T^*)\eta \\
  &\le |B_R(0)|+C\eta T+L_\eta M(u)+C(R+Z_T^*)\eta \\
  &\le M(u)|z(0)|+2L_\eta M(u)+C(R+Z_T^*)\eta+C\eta T.
\end{align*}
Since $R\le C(Z_T^*+L_\eta+1)$, it follows that
\[
  (M(u)-C\eta)Z_T^*\le C_{u,\eta}+C\eta T.
\]
First choose $\eta$ so that $C\eta\le M(u)/2$, divide by $T$, and let $T\to\infty$:
\[
  \limsup_{T\to\infty}\frac{Z_T^*}{T}\le C_u\eta.
\]
Letting $\eta\downarrow0$ proves \eqref{eq:sublinear-drift}.
\end{proof}

Once sublinear drift is known, the finite-mass endpoint estimate excludes the compact element.

\begin{proposition}[Zero-momentum rigidity]\label{prop:finite-mass-zero-momentum}
Let $u$ be a nonzero soliton-like cylindrical compact critical element.  If $M(u)<\infty$, $u$ satisfies enhanced compactness \eqref{eq:enhanced-tightness}, and $P_z(u)=0$, then no such $u$ exists.
\end{proposition}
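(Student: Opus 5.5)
The plan is to run the shifted-center localized virial argument of Theorem~\ref{thm:window-omega} with the fixed axis $Z=0$ and no bad times. The finite-mass endpoint estimate \eqref{eq:mass-endpoint} replaces the rough endpoint \eqref{eq:coarse-endpoint}, and Lemma~\ref{lem:sublinear-drift} supplies the drift control. Fix $\eta>0$ small enough that Lemma~\ref{lem:good-time-positive} applies, and let $R_\eta$ be the tightness radius from \eqref{eq:tightness}; by \eqref{eq:enhanced-tightness} we may take $R_\eta\le L_\eta$. For $T$ large, set
\[
  A_T=Z_T^*=\sup_{0\le t\le T}|z(t)|,
  \qquad
  R_T=4(R_\eta+A_T+1).
\]
Then for every $t\in[0,T]$ we have $|z(t)-0|\le A_T$. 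Lemma~\ref{lem:good-time-positive} therefore gives $M'_{R_T,0}(t)\ge 3c_1$ on all of $[0,T]$, so the good set is the whole interval. Integrating yields
\[
  M_{R_T,0}(T)-M_{R_T,0}(0)\ge 3c_1T.
\]

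For the endpoints, \eqref{eq:mass-endpoint} gives
\[
  |M_{R_T,0}(t)|\le CR_T\|u(t)\|_2\|\na u(t)\|_2\le C_uR_T.
\]
This uses conservation of the finite mass $M(u)$ together with the uniform gradient bound \eqref{eq:gradient-gap}. Hence
\[
  3c_1T\le 2C_u\cdot 4(R_\eta+Z_T^*+1).
\]
Dividing by $T$ and letting $T\to\infty$, Lemma~\ref{lem:sublinear-drift} gives $Z_T^*/T\to0$, while $R_\eta$ is fixed. The right side is therefore $o(T)$, a contradiction, since $c_1>0$ by \eqref{eq:c1-positive} and $u$ is nonzero.

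One technical point must be checked: the lifespan $I$ contains $[0,\infty)$. The soliton-like compact element is global by Definition~\ref{def:compact-element}, so this holds. If one prefers to argue backward in time, the same argument applies on $[-T,0]$.

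The only step where care is needed is confirming that Lemma~\ref{lem:sublinear-drift} applies exactly under the present hypotheses. Its hypotheses are finite mass, enhanced compactness \eqref{eq:enhanced-tightness}, and $P_z(u)=0$, all of which are assumed here, so no modification is needed. Beyond that, the argument reduces to the observation that the closing radius $R_T$ is dominated by the drift $Z_T^*$, which is sublinear. This sublinear radius matches the $\beta=1$ budget recorded in Table~\ref{tab:intro-endpoint-budget}.
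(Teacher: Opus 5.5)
Your proof is correct and is essentially the paper's argument. Both combine the sublinear drift from Lemma~\ref{lem:sublinear-drift}, an origin-centered virial with radius $R_T\sim Z_T^*+\mathrm{const}$ that is positive on all of $[0,T]$, and the finite-mass endpoint bound \eqref{eq:mass-endpoint} to get $3c_1T\le C_uR_T=o(T)$. The only difference is that you fix one small $\eta$ and apply Lemma~\ref{lem:good-time-positive} directly, whereas the paper lets $\eta_T\downarrow0$ with $L_{\eta_T}=o(T)$ and adds $T^{1/2}$ to $R_T$; your fixed-$\eta$ version is slightly simpler and equally valid.
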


\begin{proof}
By Lemma~\ref{lem:sublinear-drift}, $Z_T^*=o(T)$.  Choose $\eta_T\downarrow0$ sufficiently slowly so that
\[
  \frac{L_{\eta_T}}{T}\to0.
\]
For example, choose it by a diagonal argument on integer intervals.  Let
\[
  R_T=4(L_{\eta_T}+Z_T^*+1)+T^{1/2}.
\]
Then $R_T/T\to0$ and $R_T\ge4(L_{\eta_T}+|z(t)|+1)$ for $0\le t\le T$.  If $|x|\ge R_T/2$, then
\[
  |x-z(t)\ez|\ge |x|-|z(t)|\ge R_T/2-Z_T^*\ge L_{\eta_T}.
\]
By the virial error estimate \eqref{eq:error-hardied} and enhanced compactness \eqref{eq:enhanced-tightness}, for $T$ large,
\[
  |\Err_{R_T,0}(t)|\le c_1,
  \qquad 0\le t\le T.
\]
Using \eqref{eq:MRZ-derivative} and \eqref{eq:c1-positive},
\[
  M'_{R_T,0}(t)\ge3c_1,
  \qquad 0\le t\le T.
\]
Thus
\[
  3c_1T\le M_{R_T,0}(T)-M_{R_T,0}(0).
\]
But the finite-mass endpoint estimate \eqref{eq:mass-endpoint} gives
\[
  |M_{R_T,0}(T)|+|M_{R_T,0}(0)|\le C_uR_T=o(T),
\]
a contradiction.
\end{proof}

\begin{proof}[Proof of Theorem~\ref{thm:lowfreq-rigidity}]
By Lemma~\ref{lem:lowfreq-finite-mass}, Lemma~\ref{lem:lowfreq-L2-precompact}, and Proposition~\ref{prop:enhanced-tightness}, the solution has finite mass and enhanced compactness.  If $P_z(u)=0$, Proposition~\ref{prop:finite-mass-zero-momentum} gives a contradiction.  In general, apply the axial Galilean transform defined in the introduction with $\xi_3=-P_z(u)/M(u)$.  The formulas above show that the transformed solution has zero axial momentum, keeps the threshold condition, and still satisfies enhanced compactness.  Proposition~\ref{prop:finite-mass-zero-momentum} then gives the contradiction.
\end{proof}

\section{Proof of the conditional scattering theorem}\label{sec:scattering}

We now prove Theorem~\ref{thm:conditional-scattering} and clarify the role of the two entrances in the scattering reduction.

\begin{proof}[Proof of Theorem~\ref{thm:conditional-scattering}]
Assume for contradiction that there is a cylindrically symmetric datum $u_0\in\dot H^1(\R^3)$ satisfying \eqref{eq:threshold} whose solution does not scatter.  By the compactness reduction, Proposition~\ref{prop:compactness-reduction}, there exists a nonzero soliton-like cylindrical compact critical element $u_c$.

By the hypothesis of the theorem, $u_c$ satisfies one of the two entrances.  If it satisfies the best fixed-axis window condition \eqref{eq:Aopt-subdiff-main}, then Theorem~\ref{thm:window-rigidity} excludes $u_c$.  If it satisfies low-frequency tail smallness \eqref{eq:lowfreq-small-intro}, then Theorem~\ref{thm:lowfreq-rigidity} excludes $u_c$.  Both alternatives contradict the existence of $u_c$.  Hence failure of scattering is impossible.  Proposition~\ref{prop:LWP} gives \eqref{eq:global-lifespan-intro}, \eqref{eq:scatter-norm-intro}, and the scattering states in \eqref{eq:scattering-states-intro}.  The Duhamel representation \eqref{eq:duhamel-scatter-intro} follows by taking the limit of the integral formula in $\dot H^1$ as the upper endpoint tends to $\pm\infty$.
\end{proof}

\section*{Acknowledgements}
The author would like to thank the supporting agencies for their continued support.

\section*{Funding}
The author was supported by the Project ``Research on Nonlinear Partial Differential Equations'' (No. 2024KYCXTD018), the Special Projects in Key Areas of Guangdong Province (No. ZDZX1088), and the Fund of Guangzhou Municipal Science and Technology (No. 202102080428).

\section*{Declaration of Competing Interest}
The authors declare that they have no known competing financial interests or personal relationships that could have appeared to influence the work reported in this paper.


\begin{thebibliography}{99}

\bibitem{Strichartz1977}
R. S. Strichartz,
\newblock Restrictions of Fourier transforms to quadratic surfaces and decay of solutions of wave equations,
\newblock \emph{Duke Math. J.} 44 (1977), 705--714.

\bibitem{CazenaveWeissler1990}
T. Cazenave and F. B. Weissler,
\newblock The Cauchy problem for the critical nonlinear Schr\"odinger equation in $H^s$,
\newblock \emph{Nonlinear Anal.} 14 (1990), 807--836.

\bibitem{KeelTao1998}
M. Keel and T. Tao,
\newblock Endpoint Strichartz estimates,
\newblock \emph{Amer. J. Math.} 120 (1998), 955--980.

\bibitem{Gerard1998}
P. G\'erard,
\newblock Description du d\'efaut de compacit\'e de l'injection de Sobolev,
\newblock \emph{ESAIM Control Optim. Calc. Var.} 3 (1998), 213--233.

\bibitem{BahouriGerard1999}
H. Bahouri and P. G\'erard,
\newblock High frequency approximation of solutions to critical nonlinear wave equations,
\newblock \emph{Amer. J. Math.} 121 (1999), 131--175.

\bibitem{Keraani2001}
S. Keraani,
\newblock On the defect of compactness for the Strichartz estimates of the Schr\"odinger equations,
\newblock \emph{J. Differential Equations} 175 (2001), 353--392.

\bibitem{Bourgain1999}
J. Bourgain,
\newblock Global wellposedness of defocusing critical nonlinear Schr\"odinger equation in the radial case,
\newblock \emph{J. Amer. Math. Soc.} 12 (1999), 145--171.

\bibitem{Visan2007}
M. Visan,
\newblock The defocusing energy-critical nonlinear Schr\"odinger equation in higher dimensions,
\newblock \emph{Duke Math. J.} 138 (2007), 281--374.

\bibitem{CollianderKeelStaffilaniTakaokaTao2008}
J. Colliander, M. Keel, G. Staffilani, H. Takaoka and T. Tao,
\newblock Global well-posedness and scattering for the energy-critical nonlinear Schr\"odinger equation in $\R^3$,
\newblock \emph{Ann. of Math. (2)} 167 (2008), 767--865.

\bibitem{KenigMerle2006}
C. E. Kenig and F. Merle,
\newblock Global well-posedness, scattering and blow-up for the energy-critical focusing non-linear Schr\"odinger equation in the radial case,
\newblock \emph{Invent. Math.} 166 (2006), 645--675.

\bibitem{DuyckaertsMerle2009}
T. Duyckaerts and F. Merle,
\newblock Dynamics of threshold solutions for energy-critical NLS,
\newblock \emph{Geom. Funct. Anal.} 18 (2009), 1787--1840.

\bibitem{KillipVisan2010}
R. Killip and M. Visan,
\newblock The focusing energy-critical nonlinear Schr\"odinger equation in dimensions five and higher,
\newblock \emph{Amer. J. Math.} 132 (2010), 361--424.

\bibitem{Dodson2019}
B. Dodson,
\newblock Global well-posedness and scattering for the focusing, cubic Schr\"odinger equation in dimension $d=4$,
\newblock \emph{Ann. Sci. \'Ec. Norm. Sup\'er.} 52 (2019), 139--180.

\bibitem{HolmerRoudenko2008}
J. Holmer and S. Roudenko,
\newblock A sharp condition for scattering of the radial 3D cubic nonlinear Schr\"odinger equation,
\newblock \emph{Comm. Math. Phys.} 282 (2008), 435--467.

\bibitem{DuyckaertsHolmerRoudenko2008}
T. Duyckaerts, J. Holmer and S. Roudenko,
\newblock Scattering for the non-radial 3D cubic nonlinear Schr\"odinger equation,
\newblock \emph{Math. Res. Lett.} 15 (2008), 1233--1250.

\bibitem{TaoVisanZhang2008}
T. Tao, M. Visan and X. Zhang,
\newblock Minimal-mass blowup solutions of the mass-critical NLS,
\newblock \emph{Forum Math.} 20 (2008), 881--919.

\bibitem{Aubin1976}
T. Aubin,
\newblock Probl\`emes isop\'erim\'etriques et espaces de Sobolev,
\newblock \emph{J. Differential Geometry} 11 (1976), 573--598.

\bibitem{Talenti1976}
G. Talenti,
\newblock Best constant in Sobolev inequality,
\newblock \emph{Ann. Mat. Pura Appl.} 110 (1976), 353--372.

\bibitem{Stein1970}
E. M. Stein,
\newblock \emph{Singular Integrals and Differentiability Properties of Functions},
\newblock Princeton University Press, 1970.

\bibitem{LiebLoss2001}
E. H. Lieb and M. Loss,
\newblock \emph{Analysis}, 2nd ed.,
\newblock Graduate Studies in Mathematics, vol. 14, American Mathematical Society, 2001.

\bibitem{Cazenave2003}
T. Cazenave,
\newblock \emph{Semilinear Schr\"odinger Equations},
\newblock Courant Lecture Notes in Mathematics, vol. 10, American Mathematical Society, 2003.

\bibitem{Tao2006}
T. Tao,
\newblock \emph{Nonlinear Dispersive Equations: Local and Global Analysis},
\newblock CBMS Regional Conference Series in Mathematics, vol. 106, American Mathematical Society, 2006.

\bibitem{Grafakos2014}
L. Grafakos,
\newblock \emph{Classical Fourier Analysis}, 3rd ed.,
\newblock Graduate Texts in Mathematics, vol. 249, Springer, 2014.

\bibitem{LinaresPonce2015}
F. Linares and G. Ponce,
\newblock \emph{Introduction to Nonlinear Dispersive Equations}, 2nd ed.,
\newblock Universitext, Springer, 2015.

\end{thebibliography}
\end{document}